\documentclass[reqno]{amsart}
\usepackage{enumitem}
\usepackage{amssymb,amsmath}
\usepackage{graphicx}
\usepackage{float}

\usepackage{tikz}
\usetikzlibrary{angles,quotes,decorations.markings}\usetikzlibrary{calc,intersections,through,backgrounds}
     \usepackage{tkz-euclide}

\parskip=\smallskipamount

\newtheorem{theorem}{Theorem}[section]
\newtheorem{lemma}[theorem]{Lemma}
\newtheorem{corollary}[theorem]{Corollary}

\theoremstyle{definition}
\newtheorem{definition}[theorem]{Definition}

\newtheorem{remark}[theorem]{Remark}

\newcommand{\C}{\mathbb{C}}

\newcommand{\N}{\mathbb{N}}

\newcommand{\R}{\mathbb{R}}

\newcommand{\ang}{\mathrm{ang}}

\newcommand{\Conv}{{\mathrm{conv}}}
\newcommand{\Span}{{\mathrm{span}}}

\newcommand{\T}{\mathrm{TC}}
\newcommand{\VA}{\mathrm{VarAng}}
\newcommand{\V}{\mathrm{Var}}

\newcommand{\cC}{\mathcal{C}}

\newcommand{\cP}{\mathcal{P}}

\newcommand\Real{\mathrm{Re}}
\newcommand\Imag{\mathrm{Im}}

\def\bar{\overline}

\numberwithin{equation}{section}

\makeatletter
\@namedef{subjclassname@2020}{\textup{2020} Mathematics Subject Classification}
\makeatother

\begin{document}
%\doublespacing
\title[Degree-Bounded Polynomial Convexity of Circular and Smooth Arcs]
{Degree-Bounded Polynomial Convexity of Circular and Smooth Arcs}
\author[Marko Slapar]{Marko Slapar}
\address{Marko Slapar, Faculty of Mathematics and Physics\\ University of Ljubljana \\ Jadranska 19\\1000 Ljubljana, Slovenia\\ \newline
Faculty of Education\\ University of Ljubljana \\ Kardeljeva plo\v{s}\v{c}ad 16\\1000 Ljubljana, Slovenia\\
and\newline
  Institute of Mathematics, Physics and Mechanics\\Jadranska
  19\\1000 Ljubljana, Slovenia}
\email{marko.slapar@fmf.uni-lj.si}
\keywords{polynomial convexity, circular arcs, total absolute curvature}
\date{\today}

\thanks{The author was supported by the European Union through the ERC Advanced Grant HPDR, grant agreement No. 101053085, 
awarded to Franc Forstneri\v{c}, and by the research program P1-0291 and research project J1-70033 of ARIS, Republic of Slovenia.}
\subjclass[2020] {Primary 32E20; Secondary 30E10, 30C10}

\begin{abstract} 
Let $d\ge 1$ be an integer. We study $d$-polynomial convexity of smooth Jordan arcs in terms of their total absolute curvature $\T(K)$. We prove that every $\cC^2$-smooth Jordan arc $K\subset\C$ satisfying $\T(K)\le \frac{d-1}{d}\pi$ is $d$-polynomially convex. This bound is sharp: for every $\tau>\frac{d-1}{d}\pi$, there exists a smooth Jordan arc $K\subset\C$ such that $\T(K)<\tau$ and $K$ is not $d$-polynomially convex. We also show that, for $0<\alpha<\pi$, the circular arc $A_\alpha=\{e^{it}:|t|\le \alpha\}$ is $d$-polynomially convex if and only if $\alpha\le \frac{d-1}{d}\pi$.\end{abstract}
\maketitle

\section{Introduction} 

\noindent For a compact set $K\subset \C^n$, its polynomial hull is defined as 
\[\cP(K)=\{z\in\C^n; |P(z)|\le \|P\|_K \text{ for all polynomials }P\}.\] 
The compact set $K$ is called polynomially convex if $\cP(K)=K$. In \cite{S}, we introduced several degree-bounded variants of the polynomial hull by restricting the degrees of the polynomials appearing in its definition. In the present paper, we focus on the most natural of these notions. Let $K$ be a compact set in $\C^n$ and $d\in\N\cup\{\infty\}$. The \textit{d-polynomial hull} of $K$ is the set \[\cP_d(K)=\{z\in\C^n; |P(z)|\le \|P\|_K \text{ for all polynomials }P \text{ with }\deg P\le d\}.\] 
A compact set $K\subset\C^n$ is \textit{d-polynomially convex} if $\cP_d(K)=K$. For $d=\infty$, this definition reduces to the usual notions of polynomial hull and polynomial convexity. A compact set $K\subset\C^n$ is  \textit{ finitely polynomially convex} if $\cP_d(K)=K$ for some $d\in\N$. 

Classical polynomial convexity is studied primarily in higher dimensions, since a compact set $K\subset\C$ is polynomially convex if and only if $\C\setminus K$ is connected. By contrast, $d$-polynomial convexity already exhibits interesting phenomena in one complex dimension, where determining whether a finite set or a Jordan arc is $d$-polynomially convex is already a nontrivial problem \cite{S}. 

We study $d$-polynomial convexity of Jordan arcs in $\mathbb C$
in relation to their total absolute curvature. For a $\cC^2$-smooth Jordan arc $K\subset\C$, let its total absolute curvature be \[\T(K)=\int_K|\kappa|ds,\]
where $\kappa$ is the curvature of $K$. The main result of this paper is the following.
\begin{theorem} \label{T1} Let $d\geq 1$ be an integer, and let $K\subset\C$ be a $\cC^2$-smooth Jordan arc. If \[\T(K)\le\frac{d-1}{d}\pi,\]  
then $K$ is $d$-polynomially convex.   
\end{theorem}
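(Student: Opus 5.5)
The plan is to separate each point $z\notin K$ from $K$ by a polynomial of the special form
\[
P(w)=(w-a)^d-b ,
\]
with $a,b\in\C$ depending on $z$. Its sublevel set $\{|P|<r\}$ is a lemniscate: the preimage of the disc $D(b,r)$ under the power map $u=(w-a)^d$. If we can arrange that $u(K)\subset\overline{D}(b,r)$ while $|u(z)-b|>r$, then $|P(z)|>\|P\|_K$, and so $z\notin\cP_d(K)$. The case $d=1$ is immediate: $\T(K)=0$ forces $K$ to be a segment, and a segment is convex, hence $1$-polynomially convex. So we assume $d\ge 2$.

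\emph{Geometric input.} Parametrize $K$ by arc length, $\gamma:[0,L]\to\C$, with tangent angle $\theta(s)$. The hypothesis gives
\[
\operatorname{osc}\theta\le \T(K)\le \tfrac{d-1}{d}\pi .
\]
Hence $K$ is a graph over a line in the direction of the mean tangent. More importantly, from any point $a$ on an extension of $K$ beyond an endpoint, $K$ is seen inside a sector of opening $<\pi$.

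\emph{Reduction to a convexity statement in the $u$-plane.} We choose $a$ so that $\arg(w-a)$ is monotone along $K$ and its total variation is $<2\pi/d$. Then $u$ is injective on a neighbourhood of $K$, and $u(K)$ is a $\cC^2$ arc.

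We compute how the tangent angle transforms. Along the arc,
\[
\arg u'(w)\gamma'(s)=\theta(s)+(d-1)\arg(\gamma(s)-a),
\]
so the tangent angle of $u\circ\gamma$ is
\[
\tilde\theta=\theta+(d-1)\phi, \qquad \phi=\arg(\gamma-a).
\]
By choosing $a$ on the appropriate side, we expect $\phi$ to move monotonically against the turning of $\theta$. The target is to show that $\tilde\theta$ is monotone with total variation at most $\pi$. Then $u(K)$ is a convex arc contained in its convex hull, and every point not on it can be separated by a disc containing $u(K)$: take a huge disc tangent to a supporting line, or a disc through the chord. The quantitative heart is the bound
\[
\operatorname{osc}\theta+(d-1)\operatorname{osc}\phi\le \pi .
\]
The threshold $\frac{d-1}{d}\pi$ should be exactly what makes this achievable. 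For the circular arc one computes $\operatorname{osc}\phi$ explicitly, which is why the later circular-arc statement has the same threshold.

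\emph{Handling all $z\notin K$.} For $z$ far from $K$ a single affine ($d=1$-type) separation suffices. For $z$ near $K$ we vary $a$ (e.g.\ along both endpoint extensions and along normals) and choose $b,r$ so that $u(z)$ lies outside a disc containing $u(K)$. Two points need care:
\begin{itemize}[label={}]
\item points $z$ with $u(z)\in u(K)$, i.e.\ other $d$-th root branches, which we must exclude by perturbing $a$;
\item points on the concave side close to $K$.
\end{itemize}
The concave side is where the sharp constant enters, and I expect it to be the main obstacle. For those $z$ I would first try taking $a$ on the convex side, far along the normal through the nearest point of $K$ to $z$. This flattens $u(K)$ near $u(z)$ and uses strict convexity of $u(K)$ to put $u(z)$ strictly outside a large disc containing $u(K)$. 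The equality case $\T(K)=\frac{d-1}{d}\pi$ would be obtained by a limiting or perturbation argument, using that the separating inequality is strict for $z\notin K$.
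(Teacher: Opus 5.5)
Your plan has a genuine gap. The step that carries all the content is producing, for each $z\in\Conv(K)\setminus K$, a center $a$ that works. That step is never established, and the mechanisms you propose for it fail.

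Write $u_a(w)=(w-a)^d$. Since closed discs are convex, there is a $b$ with $|u_a(z)-b|>\max_K|u_a-b|$ if and only if $u_a(z)\notin\Conv\bigl(u_a(K)\bigr)$. So your reduction ``$u_a(K)$ is a convex arc, hence every point off it is cut off by a disc'' is false: points between a convex arc and its chord lie in every disc containing the arc.

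Your remedy for the concave side, taking $a$ far along the normal, fails for every $z\in\operatorname{int}\Conv(K)$. As $|a|\to\infty$, $\bigl((w-a)^d-(-a)^d\bigr)/\bigl(d(-a)^{d-1}\bigr)\to w$ uniformly on compact sets, so $u_a(z)\in\Conv\bigl(u_a(K)\bigr)$ for all large $|a|$. Flattening is the wrong effect. You need $u_a(K)$ to bend \emph{away} from $u_a(z)$, i.e.\ $\theta'+(d-1)\phi'<0$ near the point of $K$ closest to $z$. For $a$ on the normal this forces $a$ to lie within distance $(d-1)/\kappa$ of $K$, and then controlling the global shape of $u_a(K)$ is the entire problem.

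A borderline example shows that your target $\operatorname{osc}\theta+(d-1)\operatorname{osc}\phi\le\pi$ (convexity of $u_a(K)$) points the wrong way. Take $d=3$, $K=\{e^{it}:|t|\le\pi/3\}$ (so $\T(K)=\frac23\pi$), $z=r<1$ and $a=R>1$. Then
\[
\Real(e^{it}-R)^3-\Real(1-R)^3=(1-\cos t)\,g(\cos t),\qquad g(c)=-3R^2+6R(1+c)-(1+2c)^2 .
\]
Hence for $1<R\le\frac{9+\sqrt{33}}{6}$ the polynomial $(w-R)^3-b$, with $b$ large and positive, separates every such $r$. For each larger $R$ it fails once $r$ is close enough to $1$. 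On the other hand, the curvature of $u_R(K)$ has the sign of $R^2-4R\cos t+3$, so $u_R(K)$ is convex exactly when $R\ge 3$. The centers that work give images with two inflection points, and those giving convex images all fail.

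Nothing in the proposal shows that a working $a$ exists in general, or where $\frac{d-1}{d}\pi$ would enter. The circular-arc heuristic does not help either: $\T(A_\alpha)=2\alpha$, so the threshold in Theorem~\ref{T3} corresponds to total curvature $\frac{2(d-1)}{d}\pi$, twice the bound here.

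Two further problems. First, the borderline case $\T(K)=\frac{d-1}{d}\pi$ cannot be obtained by a limiting argument. $d$-polynomial hulls are only upper semicontinuous under Hausdorff convergence: if $z_n\in\cP_d(K_n)$, $z_n\to z$ and $K_n\to K$, then $z\in\cP_d(K)$. So $\cP_d(K_n)=K_n$ for approximating arcs of smaller curvature gives no conclusion about $\cP_d(K)$, and $K$ cannot be enlarged while its curvature is lowered. In the paper the equality case comes from the \emph{strict} inequality $\VA(X)<\T(X)+\pi$ of Lemma~\ref{L3}. Second, for $d\ge 3$ the polynomials $(w-a)^d-b$ form a thin two-complex-parameter slice of the degree-$d$ polynomials, and you give no reason why this slice suffices.

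The paper avoids separating polynomials entirely and works on the dual side.
\begin{enumerate}
\item If $0\in\cP_d(K)\setminus K$, Lemma~\ref{L} and Carath\'eodory give points $z_r\in K$ and weights $\lambda_r>0$ with $\sum_r\lambda_r z_r^k=0$ for $1\le k\le d$.
\item The step map $F_0$ with jumps $\lambda_r z_r$, and its iterated primitives $F_{j+1}=\int F_j\,d\gamma$, vanish at both ends of the parameter interval for $j\le d-1$, by these moment conditions.
\item Lemma~\ref{L1} says a nonconstant map starting and ending at $0$ has $\T\ge\pi+\VA$. Lemma~\ref{L2} says $\T\bigl(\int G\,d\gamma\bigr)\le\VA(G)+\T(K)$. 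Together they give $\T(F_0)\ge d\pi-(d-1)\T(K)$.
\item On the other hand $\T(F_0)\le\VA(\gamma)<\T(K)+\pi$, which forces $\T(K)>\frac{d-1}{d}\pi$.
\end{enumerate}
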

The following corollary is an immediate consequence of the above theorem. 
\begin{corollary} Let $K\subset\C$ be a $\cC^2$-smooth Jordan arc. If $\T(K)<\pi$, then $K$ is finitely polynomially convex.  	
\end{corollary}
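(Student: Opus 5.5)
The corollary follows directly from Theorem~\ref{T1}; all that is needed is a suitable choice of the degree $d$. Let $K\subset\C$ be a $\cC^2$-smooth Jordan arc with $\T(K)<\pi$, and set $\varepsilon=\pi-\T(K)>0$. The threshold in Theorem~\ref{T1} is
\[
\frac{d-1}{d}\pi=\pi-\frac{\pi}{d},
\]
which increases to $\pi$ as $d\to\infty$. So we choose an integer $d\ge 1$ with $\frac{\pi}{d}\le\varepsilon$, for instance $d=\lceil \pi/\varepsilon\rceil$. For this $d$,
\[
\T(K)=\pi-\varepsilon\le\pi-\frac{\pi}{d}=\frac{d-1}{d}\pi .
\]

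Theorem~\ref{T1} then applies and gives $\cP_d(K)=K$. By definition this means $K$ is finitely polynomially convex.

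The only point to check is that the chosen $d$ is a genuine positive integer, which holds because $\varepsilon>0$ is finite. The case $\T(K)=0$ is also covered: then $\varepsilon=\pi$ and $d=1$, and $K$ is a line segment, which is $1$-polynomially convex (it is convex). No step here is a real obstacle; all of the difficulty lies in Theorem~\ref{T1} itself.

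The strict inequality $\T(K)<\pi$ is essential to this argument. For $\T(K)=\pi$ no finite $d$ satisfies $\pi\le\frac{d-1}{d}\pi$, so the argument gives no conclusion in that case.
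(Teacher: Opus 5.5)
Your argument is correct and is essentially the paper's own: the paper calls the corollary an immediate consequence of Theorem~\ref{T1}. It applies that theorem with $d$ chosen large enough that $\T(K)\le\pi-\frac{\pi}{d}=\frac{d-1}{d}\pi$, which is possible precisely because $\T(K)<\pi$.
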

It remains unclear what happens at the limiting value $\T(K)=\pi$.
In particular, we do not know whether every $\cC^2$-smooth Jordan arc
with total absolute curvature $\pi$ is finitely polynomially convex.

The curvature bound in Theorem \ref{T1} is sharp in the following sense.
\begin{theorem} \label{T2} 
Let $d\ge 1$ be an integer and let
\[\tau>\frac{d-1}{d}\pi.\]
Then there exists a $\cC^{\infty}$-smooth Jordan arc $K\subset\C$ such that $\T(K)<\tau$ and $K$ is not $d$-polynomially convex.
\end{theorem}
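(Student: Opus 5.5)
The natural candidate is a circular arc $A_\alpha=\{e^{it}:|t|\le\alpha\}$ with $\frac{d-1}{d}\pi<\alpha<\min(\pi,\tau)$, smoothed or taken as is. A circular arc of angle $2\alpha$ has total absolute curvature $2\alpha$, not $\alpha$. So to match the bound $\frac{d-1}{d}\pi$ we would need $2\alpha<\tau$, while the abstract claims $A_\alpha$ fails $d$-convexity only when $\alpha>\frac{d-1}{d}\pi$. Circular arcs alone therefore give $\T$ close to $\frac{2(d-1)}{d}\pi$, which is too large.

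The construction I would use instead bends the curve back so that most of the turning cancels in absolute value. I would look for a polynomial $P$ with $\deg P\le d$ and a point $z_0\notin K$ with $|P(z_0)|>\|P\|_K$. The simplest choice is $P(z)=z^d$ with $z_0=0$. We need $K$ to avoid $0$ and to satisfy $|z|^d\ge$ something, but $|P(0)|=0$, so instead take $P(z)=1-z^d$ or a similar polynomial and design $K$ inside the region $\{|P|<|P(z_0)|\}$. Concretely, take $P(z)=z^d$ and $z_0$ with $|z_0|=1$. Then $K$ must lie in the open unit disc, and $z_0$ must not lie in $\cP_d(K)$ for every other test polynomial, but we only need non-convexity, so $K\subsetneq\cP_d(K)$ suffices.

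The cleanest route is the following. The sublevel set $\{|1-z^d|<1\}$ consists of $d$ petals meeting at $0$, and $z_0=0$ has $|P(0)|=1$. If $K$ lies in $\{|1-z^d|\le 1\}$, then $0$ is in the hull only if $|P(0)|\le\|P\|_K$, which is automatic, so this gives nothing. Reverse it: take $K$ inside $\{|z^d-1|<1\}$ with $0\notin K$ and check $|Q(0)|\le \|Q\|_K$ for all $Q$ of degree $\le d$. That is hard to verify directly. So I would rely on the later result, that $A_\alpha$ is not $d$-polynomially convex for $\alpha>\frac{d-1}{d}\pi$, and reduce the curvature cost. Take two points $p_\pm=e^{\pm i\alpha}$ of such an arc, where the hull gains some point $w$, e.g. $w=0$ or a point of $\cP_d(A_\alpha)\setminus A_\alpha$. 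Then deform: join $p_-$ to $p_+$ by a curve that turns by total amount about $\alpha$. For example, start at $p_-$ heading toward the chord direction, go nearly straight, and make a single turn near the middle. The key step is that non-convexity is stable: if $w\in\cP_d(A_\alpha)\setminus A_\alpha$, there should be a finite set $F\subset A_\alpha$, at most $\dim(\text{polys of degree}\le d)=d+1$ points by a Carathéodory-type argument, with $w\in\cP_d(F)$. Hence any Jordan arc through $F$ avoiding $w$ is not $d$-convex.

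So the plan is as follows. First, use the circular-arc result (proved later) and a finite-set reduction to get points $F=\{f_0,\dots,f_d\}$ on a circle with $w\in\cP_d(F)$, $w\notin F$. These points should be arranged with angular span just over $\frac{d-1}{d}\pi$ when measured appropriately; the truly sharp extremal configuration is probably $d+1$ points at spacing $\frac{\pi}{d}$. Second, build a $\cC^\infty$ arc through $F$, avoiding $w$, whose tangent direction varies monotonically over an interval of length slightly more than the angle needed. The minimal total absolute curvature of an arc visiting points in convex position in order is the turning of the polygonal path through them plus $\varepsilon$. For points on a circle with consecutive arcs $\theta_j$, the polygon turns $\sum$ of interior exterior angles $=\frac12(\text{total arc}-\theta_{\text{first}}/1\ldots)$, roughly half the angular span. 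This halving is exactly what should produce $\frac{d-1}{d}\pi$ from an angular span of about $\frac{2(d-1)}{d}\pi$. Third, smooth the corners in tiny neighbourhoods to get $\T(K)<\tau$.

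The main obstacle is the first step: showing that $\cP_d(F)$ contains a point off $F$ for a suitable $(d+1)$-point set $F$ whose polygonal turning is just above $\frac{d-1}{d}\pi$. I would try $F$ consisting of $d+1$ consecutive points among the $2d$-th roots of unity, with $w=0$. Here one shows, via a Lagrange-interpolation or duality argument, that there is a probability measure on $F$ representing evaluation at $0$ for all polynomials of degree $\le d$. The weights come from a signed quadrature that must be made nonnegative by perturbing the spacing slightly. Then the turning of the polygon through these points is $(d-1)\frac{\pi}{d}$, and a perturbation gives the strict inequality with $\tau$.
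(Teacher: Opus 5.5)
Your proposal has a genuine gap. Step~1, which you yourself call the main obstacle, is never carried out, and the concrete configuration you suggest for it does not work.

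The points $1,e^{i\pi/d},\dots,e^{i\pi}=-1$ lie in the closed upper half-plane. So any probability measure on them with barycenter $0$ must put equal mass on $\pm1$ and nothing elsewhere. Then $\int z^2\,d\mu=1\neq0$, and hence $0\notin\cP_2(F)\supseteq\cP_d(F)$ for every $d\ge2$.

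Perturbing the spacing cannot repair this. By Theorem~\ref{T3} and rotation invariance, a set $F$ contained in an arc of the unit circle of length at most $\frac{2(d-1)}{d}\pi$ has $\cP_d(F)$ contained in that arc. So with $w=0$ the nodes must span strictly more than $\frac{2(d-1)}{d}\pi\ge\pi$.

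Your fallback takes nodes in $A_\alpha$ from the circular-arc result and connects them. It relies on the claim that the inscribed polygon turns by about half the angular span, and this is false in general. For $e^{it_1},\dots,e^{it_n}$ with $t_1<\dots<t_n$, the polygon turns by $(t_n-t_1)-\frac12\bigl[(t_2-t_1)+(t_n-t_{n-1})\bigr]$. This is close to half the span only when all interior nodes cluster at one point. Clustering does happen for $d=2,3$: for $w$ near $1$, the symmetric representing measure sits on $e^{\pm i\alpha}$ together with nodes tending to $1$. Nothing forces it in general, though. For $d=4$, the symmetric five-node representing measure for $w$ near $1$ has nodes near $e^{\pm3\pi i/4},e^{\pm i\pi/4},1$. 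Every arc through these in angular order then has total curvature at least about $\pi>\frac34\pi$. Separately, Carath\'eodory in $\C^d\cong\R^{2d}$ gives up to $2d+1$ nodes, not $d+1$.

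What is missing is a configuration whose $d$-hull membership can be verified exactly and whose connecting arc is almost straight. The paper does not use circles at all.

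\begin{itemize}
\item It takes two rays from $0$ at angle $\pi\alpha$, with $\max\{0,1-\tau/\pi\}<\alpha<\frac1d$.
\item It pushes the Cauchy distribution forward under $x\mapsto|x|^\alpha$ for $x\ge0$ and $x\mapsto e^{i\pi\alpha}|x|^\alpha$ for $x<0$.
\item Since $\int_0^\infty\frac{x^s}{1+x^2}\,dx=\frac{\pi}{2\cos(\pi s/2)}$ for $|s|<1$, and $\alpha k<1$ for $k\le d$, the resulting measure has $k$-th moment exactly $w^k$. Here $w=e^{i\pi\alpha/2}$ is a point off the rays.
\item The Richter--Tchakaloff theorem converts this into finitely many nodes on the rays.
\item The arc is made of two ray segments joined near the vertex by a connector whose tangent turns monotonically. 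The straight parts contribute nothing, so $\T(K)=(1-\alpha)\pi<\tau$.
\end{itemize}

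Far from the vertex, this arc heads straight toward $w$ and then straight away from it. That is exactly the near-equality regime of Lemma~\ref{L3}, and your circle-based route gives no mechanism for reaching it for general $d$.
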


In the second part of the paper, we turn to circular arcs. In \cite[Proposition~16]{S}, we proved that the circular arc $A_\alpha$ is not $d$-polynomially convex when $\alpha>\frac{d-1}{d}\pi,$
but the converse was left open; see \cite[Remark~17]{S}. The following theorem resolves this question and gives the exact threshold for $d$-polynomial convexity of circular arcs.
\begin{theorem} \label{T3} 
Let $d\ge 1$ be an integer, let $0<\alpha<\pi$, and let
\[A_\alpha=\{e^{it};\,-\alpha\le t\le\alpha\}
\] 
be the circular arc. Then $A_\alpha$ is $d$-polynomially convex if and only if
\[\alpha\le\frac{d-1}{d}\pi.\]
\end{theorem}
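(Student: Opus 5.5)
The "only if" direction is \cite[Proposition~16]{S}, so we only need the "if" direction. Assume $\alpha\le\frac{d-1}{d}\pi$. We must show that every $z\notin A_\alpha$ is excluded from $\cP_d(A_\alpha)$, that is, we must find a polynomial $P$ with $\deg P\le d$ and $|P(z)|>\|P\|_{A_\alpha}$.

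For $\alpha<\frac{d-1}{d}\pi$ this already follows from Theorem~\ref{T1}, because $\T(A_\alpha)=2\alpha$. Wait, that does not fit: $2\alpha$ may exceed $\frac{d-1}{d}\pi$, so Theorem~\ref{T1} covers only $\alpha\le\frac{d-1}{2d}\pi$. A direct argument is therefore needed, and it should use the rotational symmetry of the arc.

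\textbf{Points far from the arc.} The natural polynomials to use are $P(w)=(w-c)^d$ and, more generally, products of linear factors. For $P(w)=w^d$ we have $\|P\|_{A_\alpha}=1$, so every $z$ with $|z|>1$ is separated. Since $d\ge 1$, the polynomial $w-1$ (or a suitable translate $w-c$ with $c\in\R$ large) separates points according to their distance from a point. The idea is to cover the complement by regions of the form $\{|z-c|>\max_{A_\alpha}|w-c|\}$, together with regions where a degree-$d$ polynomial is needed.

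\textbf{The key polynomial near the gap.} The critical case is $z$ inside the unit disc, or $z$ near the missing arc $\{e^{it}: |t|>\alpha\}$. For these points I would use $Q_c(w)=(w+c)^d$ with $c>0$ varied, or more generally $Q(w)=w^d+\lambda$. The map $w\mapsto w^d$ sends $A_\alpha$ onto the arc $\{e^{is}: |s|\le d\alpha\}$, and since $d\alpha\le (d-1)\pi$ this image leaves uncovered an arc of length at least $2\pi-2(d-1)\pi$. That is negative when $d\ge2$, so $w^d$ alone is not enough.

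A better choice is the family of polynomials $P(w)=(w-a)^{d}$, or factorizations $\prod_{j}(w-a_j)$ with the roots $a_j$ placed on the circle in the gap. The aim is to show that $|P|$ restricted to the unit circle is maximized on the gap. The extremal configuration is $\alpha=\frac{d-1}{d}\pi$: there, for $z$ on the missing arc, take the roots at $e^{i(\pi+2\pi k/d)}$ rotated suitably, giving $P(w)=w^d-e^{id\theta}$. Its modulus on the circle is $|e^{id(t-\theta)}-1|$, which is maximal exactly when $t-\theta\equiv\pi/d \pmod{2\pi/d}$.

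\textbf{Main obstacle.} The difficult step is showing that for each $z$ in the unit disc with $z\notin A_\alpha$ there is a choice of the roots making $|P(z)|>\max_{A_\alpha}|P|$. This should reduce, by the maximum principle applied to the region bounded by $A_\alpha$ and a level curve, to an angle-counting estimate: the set where $|w^d-e^{id\theta}|$ is below its maximum over $A_\alpha$ has to be connected to infinity away from $z$. That angle count is precisely where the hypothesis $d\alpha\le(d-1)\pi$ enters. I would first try to verify this carefully on the boundary case $\alpha=\frac{d-1}{d}\pi$, since the condition is monotone in $\alpha$ and $A_{\alpha'}\subset A_\alpha$ for $\alpha'\le\alpha$.
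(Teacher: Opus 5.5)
\textbf{There is a genuine gap: the proposal never handles the points that actually need excluding.}

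Your preliminary observations are fine. The ``only if'' direction is the cited Proposition~16. Theorem~\ref{T1} only reaches $\alpha\le\frac{d-1}{2d}\pi$, because $\T(A_\alpha)=2\alpha$. Monotonicity in $\alpha$ does reduce everything to $\alpha=\frac{d-1}{d}\pi$.

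Since $\cP_d(A_\alpha)\subset\cP_1(A_\alpha)=\Conv(A_\alpha)$, the only points to exclude are those of $\Conv(A_\alpha)\setminus A_\alpha$, all lying in the open unit disc. For these you give no argument; your ``main obstacle'' is the whole content of the statement. Moreover, the families you propose cannot work there:
\begin{itemize}
\item A power $(\zeta-c)^d$ separates exactly the points that $\zeta-c$ separates. These are the points outside the closed disc of radius $\max_{\zeta\in A_\alpha}|\zeta-c|$ about $c$, and that disc contains $\Conv(A_\alpha)$.
\item When $\alpha\ge\pi/d$ (which includes your boundary case for $d\ge2$), $\zeta\mapsto\zeta^d$ maps $A_\alpha$ onto the whole circle. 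So $\max_{A_\alpha}|\zeta^d+\lambda|=1+|\lambda|>|z^d+\lambda|$ for $|z|<1$.
\item Similarly, $\zeta^d-e^{id\theta}$ attains the value $2$ on $A_\alpha$ but has modulus less than $2$ in the open disc.
\item Roots in the gap fail as well. Take $d=2$, $\alpha=\pi/2$, the point $z=\frac12\in\Conv(A_{\pi/2})\setminus A_{\pi/2}$, and any $a$ with $|a|=1$, $\Real a<0$. Then $|\tfrac12-a|^2=\tfrac54-\Real a<2-2\Real a=|1-a|^2$. Hence every nonconstant $P$ of degree at most $2$ with all zeros in the gap satisfies $|P(\tfrac12)|<|P(1)|\le\|P\|_{A_{\pi/2}}$.
\end{itemize}
Knowing where $|P|$ is maximal on the circle says nothing about $|P|$ at interior points. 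The maximum-principle/angle-counting step is not an argument as stated.

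\textbf{The missing idea is to argue on the measure side instead of guessing polynomials.} Suppose a point $w$ of the open disc were in $\cP_d(A_\alpha)$. Lemma~\ref{L} would give a probability measure $\mu$ on $A_\alpha$ with $\int z^k\,d\mu=w^k$ for $0\le k\le d$. The Poisson measure $\sigma_w$ has the same moments. Since both measures are positive and live on the circle, the moments also agree for $-d\le k<0$. Hence, for every polynomial $q$ with $\deg q\le d-1$,
\[\int_{S^1}(\cos t-\cos\alpha)|q|^2\,d\sigma_w=\int_{A_\alpha}(\cos t-\cos\alpha)|q|^2\,d\mu\ge0.\]
That is, the compression $C$ of multiplication by $\cos t$ to $\Span\{1,\dots,z^{d-1}\}\subset L^2(\sigma_w)$ satisfies $C\ge\cos\alpha$.

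In the orthonormal basis $1$, $z^{j-1}(z-w)/\sqrt{1-|w|^2}$ $(1\le j\le d-1)$, the matrix $C$ is tridiagonal. Its lower $(d-1)\times(d-1)$ block has zeros on the diagonal and $\frac12$ off it, so its smallest eigenvalue is $-\cos\frac{\pi}{d}$. This block is coupled to the corner entry $\Real w$ by $\frac12\sqrt{1-|w|^2}\neq0$. A Rayleigh-quotient perturbation then gives $\lambda_{\min}(C)<-\cos\frac{\pi}{d}\le\cos\alpha$, a contradiction.

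This also produces the separating polynomial you were after. Take $q$ a bottom eigenvector of $C$, and write $(\cos t-\cos\alpha)|q(e^{it})|^2=\Real R(e^{it})$ with $\deg R\le d$. Then $\Real R\ge0$ on $A_\alpha$, and $\Real R(w)=\int\Real R\,d\sigma_w=\langle (C-\cos\alpha)q,q\rangle<0$. So $M-R$ separates $w$ from $A_\alpha$ for large $M$, and its zeros are far from the circle, not in the gap.
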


\section{Preliminaries}
\subsection{Degree-bounded polynomial hulls}

The following lemma gives two characterizations of membership in the $d$-polynomial hull.
\begin{lemma}\label{L}  Let $d\ge 1$ be an integer, let $K\subset\mathbb{C}$ be compact, and let $w\in\C$. Define the Veronese map $\nu_d:\C\to\C^d$ by $\nu_d(z)=(z,z^2,\ldots,z^d)$. 

Then the following are equivalent:
\begin{itemize}
\item[(i)] $w\in\cP_d(K);$
\item[(ii)] there exists a regular Borel probability measure $\mu$ supported on $K$ such that
\[w^k=\int_K z^k\,d\mu(z),\qquad k=0,1,\ldots,d;\]	
\item[(iii)] $\nu_d(w)\in\Conv(\nu_d(K)).$ 
\end{itemize}
\end{lemma}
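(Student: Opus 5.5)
I would prove the equivalences in the cycle (ii)$\Leftrightarrow$(iii) and (i)$\Leftrightarrow$(iii), using the fact that polynomials of degree at most $d$ are exactly the functions $P=c_0+\ell\circ\nu_d$, where $\ell:\C^d\to\C$ is complex linear. Throughout, $\nu_d(K)$ is a compact subset of $\C^d\cong\R^{2d}$. By Carath\'eodory's theorem, its convex hull is therefore compact and consists of finite convex combinations of points of $\nu_d(K)$.

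For (ii)$\Rightarrow$(iii), set $p=\int_K\nu_d\,d\mu\in\C^d$, so that $p=\nu_d(w)$ by hypothesis. If $p\notin\Conv(\nu_d(K))$, then, since this set is compact and convex, Hahn--Banach in $\R^{2d}$ gives a real linear functional $\Real\,\ell$, with $\ell$ complex linear, and a constant $c$ such that $\Real\,\ell(p)>c\ge\Real\,\ell(\nu_d(z))$ for all $z\in K$. Integrating against the probability measure $\mu$ gives $\Real\,\ell(p)\le c$, a contradiction. For (iii)$\Rightarrow$(ii), write $\nu_d(w)=\sum_j\lambda_j\nu_d(z_j)$ with $z_j\in K$, $\lambda_j\ge0$ and $\sum\lambda_j=1$, and take $\mu=\sum_j\lambda_j\delta_{z_j}$. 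The $k=0$ moment condition is automatic because $\mu$ has total mass one.

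For (ii)$\Rightarrow$(i), let $P$ have degree at most $d$. Linearity of the moment conditions gives $P(w)=\int_K P\,d\mu$, hence $|P(w)|\le\|P\|_K$. The substantive direction is (i)$\Rightarrow$(iii), which I would prove by contraposition. Suppose $\nu_d(w)\notin\Conv(\nu_d(K))$. Strict separation gives a complex linear $\ell$ and a real number $c$ with
\[
\Real\,\ell(\nu_d(w))>c>\max_{z\in K}\Real\,\ell(\nu_d(z)).
\]
Let $Q=\ell\circ\nu_d$, a polynomial of degree at most $d$. The task is to turn this half-plane separation into a modulus inequality. I would take $P=Q-c+R$ for a large real constant $R>0$ and compare $|P(w)|$ with $\|P\|_K$. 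Write $a=\Real Q(w)-c>0$ and $b=\Real Q(z)-c\le-\epsilon$ on $K$, with all imaginary parts bounded by $M$. Then $|P(w)|^2\ge(R+a)^2$, while for $z\in K$ we have $|P(z)|^2\le(R-\epsilon)^2+M^2$. As soon as $R$ is large enough that $2R\epsilon>M^2+\epsilon^2$, we get $|P(w)|>\|P\|_K$. Since $\deg P\le d$, this shows $w\notin\cP_d(K)$.

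The main obstacle is exactly this passage from a real-part separation to a modulus separation. The device of adding a large real constant handles it, and it relies on $\deg P\le d$ being preserved when constants are added. One could instead use $e^{Q}$, but that is not a polynomial.
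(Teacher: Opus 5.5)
Your argument is correct in substance, but the main direction follows a different route from the paper. The paper proves (i)$\Rightarrow$(ii) analytically. It extends the point evaluation $P|_K\mapsto P(w)$ from the $(d+1)$-dimensional space $E_d$ to a norm-one functional on $C(K)$ by Hahn--Banach, represents it by a measure via Riesz, and gets positivity from $\|\tilde L\|=\tilde L(1)=1$. It then disposes of (ii)$\Rightarrow$(iii) in one line (``$\nu_d(w)$ is a barycenter''), and handles (iii)$\Rightarrow$(ii) with Carath\'eodory, essentially as you do.

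You instead prove (i)$\Rightarrow$(iii) by contraposition, entirely inside $\C^d\cong\R^{2d}$. You separate $\nu_d(w)$ from the compact set $\Conv(\nu_d(K))$ by $\Real\,\ell$, then turn the half-plane separation into a modulus separation by adding a large real constant. That step is legitimate precisely because constants have degree $\le d$. Your route avoids the Riesz representation theorem and the fact that a norm-one functional fixing $1$ is positive. Your separation argument for (ii)$\Rightarrow$(iii) also supplies a detail the paper leaves implicit: the barycenter of a probability measure lies in the closed convex hull, and this coincides with $\Conv(\nu_d(K))$ because that set is compact. The paper's route, in exchange, produces the representing measure directly and mirrors the classical representing-measure description of $\cP(K)$.

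One estimate needs an extra condition. The bound $|P(z)|^2\le(R-\epsilon)^2+M^2$ on $K$ requires $|R+b(z)|\le R-\epsilon$, but $b\le-\epsilon$ gives only the upper half of this. If $\Real Q$ is very negative somewhere on $K$, then $R+b(z)$ can be a large negative number. In that case the condition $2R\epsilon>M^2+\epsilon^2$ alone does not force $|P(w)|>\|P\|_K$. Since $b$ is continuous on the compact set $K$, say $|b|\le B$ there, also require $2R\ge B+\epsilon$. Then $-(R-\epsilon)\le R+b(z)\le R-\epsilon$ on $K$, and the rest of your estimate goes through unchanged.
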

\begin{proof}
Suppose first that $w\in\cP_d(K)$, and let 
\[E_d=\{P|_K;\ P\in\C[z],\ \deg P\le d\}\subset C(K).\]
Define $L:E_d\to\C$ by $L(P|_K)=P(w)$. Since $w\in\cP_d(K)$, this functional is well defined and satisfies
\[|L(P|_K)|=|P(w)|\le \|P\|_K.\]
Thus $\|L\|\le 1$, and $L(1)=1$, so $\|L\|=1$. By the Hahn-Banach theorem, $L$ extends to a norm-one functional $\tilde L$ on $C(K)$. The Riesz representation theorem then yields a regular Borel measure $\mu$ on $K$ such that
\[\tilde L(f)=\int_K f\,d\mu,\qquad f\in C(K).\]
Since $\|\tilde L\|=\tilde L(1)=1$, the measure $\mu$ is positive and has total mass one. Hence it is a probability measure, and
\[w^k=\tilde L(z^k)=\int_K z^k\,d\mu(z),\qquad k=0,1,\ldots,d.\]
Thus \emph{(i)} implies \emph{(ii)}.
Conversely, suppose that \emph{(ii)} holds. If
\[P(z)=\sum_{k=0}^d a_kz^k,\]
then
\[P(w)=\sum_{k=0}^d a_kw^k=\int_K\sum_{k=0}^d a_kz^k\,d\mu(z)
=\int_K P(z)\,d\mu(z).
\]
Consequently,
\[|P(w)|\le \int_K |P(z)|\,d\mu(z)\le \|P\|_K.\]
Hence $w\in\cP_d(K)$, proving that \emph{(ii)} implies
\emph{(i)}.

It remains to prove the equivalence of \emph{(ii)} and \emph{(iii)}.
If \emph{(ii)} holds, then
\[\nu_d(w)
=\left(\int_Kz\,d\mu(z),\ldots,\int_Kz^d\,d\mu(z)\right)=
\int_K \nu_d(z)\,d\mu(z).
\]
Thus $\nu_d(w)$ is a barycenter of points in $\nu_d(K)$, and therefore
\[\nu_d(w)\in\Conv(\nu_d(K)).\]
Conversely, suppose that
\[\nu_d(w)\in\Conv(\nu_d(K)).\]
Then by Carath\'eodory's theorem, there exist $z_1,z_2,\ldots,z_m\in K$, with $m\le 2d+1$, and weights
\[\lambda_1,\lambda_2,\ldots,\lambda_m>0,\quad \sum_{j=1}^m \lambda_j=1,\] 
such that
\[\nu_d(w)=\sum_{j=1}^m \lambda_j\nu_d(z_j).
\]
Define the probability measure
\[\mu=\sum_{j=1}^m \lambda_j\delta_{z_j}.\]
Comparing the coordinates in the preceding identity yields
\[w^k=\sum_{j=1}^m \lambda_jz_j^k=\int_Kz^k\,d\mu(z),
\qquad k=1,2,\ldots,d.
\]
The identity for $k=0$ follows from
$\sum_{j=1}^m\lambda_j=1$. Hence \emph{(ii)} holds, completing the proof.
\end{proof}

The analogue of the preceding measure characterization for the classical polynomial hull is well known; see, for example, \cite[Section~1.2]{St}. 

\subsection{Variation of angle and total absolute curvature} 

For nonzero vectors $u,v\in\mathbb C$, let
\[\ang(u,v)=\arccos\!\left(
        \frac{\Real(u\overline v)}{|u|\,|v|}\right)\in[0,\pi].\]
Thus $\ang(u,v)$ is the smaller angle between the directions of
$u$ and $v$. Following \cite{M}, we extend the polygonal definition of total absolute curvature to arbitrary parametrized maps; for continuous rectifiable curves, this agrees with the classical definition.
\begin{definition}[Total absolute curvature]\label{D1}
Let $X:[a,b]\to\C$ be an arbitrary map.  For a partition
$P:a=t_0<t_1<\cdots<t_m=b,$
consider the polygonal chain with ordered vertices
$X(t_0),X(t_1),\ldots,X(t_m).$
Delete consecutive repeated vertices.  If $x_0,x_1,\ldots,x_N$
are the remaining vertices, put
\[\T_P(X)=\sum_{j=1}^{N-1}\ang(x_j-x_{j-1},\,x_{j+1}-x_j),\]
with the convention that the sum is $0$ if $N\leq 1$.
The \emph{total absolute curvature} of $X$ is
\[\T(X)=\sup_P \T_P(X)\in[0,\infty],\]
where the supremum is taken over all finite partitions $P$ of $[a,b]$.
\end{definition}

\begin{definition}[Variation of angle]\label{D2}
Let $X:[a,b]\to\mathbb C$ be an arbitrary map.  Its \emph{variation of angle} is
\[\VA(X)=\sup
    \left\{
        \sum_{j=1}^{m}
        \ang\left(X(t_{j-1}),X(t_j)\right);\,
        a\leq t_0<\cdots<t_m\leq b,\;
        X(t_j)\neq 0
    \right\},
\]
where the supremum is taken over all finite ordered sets of
parameters at which $X$ is nonzero.  If $X$ takes fewer than two
nonzero values, we set $\VA(X)=0$.
\end{definition}

\begin{remark}\label{R1}
No continuity or rectifiability is required in the preceding definitions, and either quantity is allowed to be infinite. If $X$ is a continuous rectifiable curve, the above definition of $\T(X)$ agrees with the usual polygonal definition of total absolute curvature.  In particular, if $X$ is a regular $\cC^2$ curve, then
\[\T(X)=\int |\kappa(s)|\,ds.\]
If $X$ is a finite step function, $\T(X)$ is the total curvature of the polygonal chain obtained by joining its successive values by straight segments. Likewise, if $X$ is continuous and nonvanishing and
\[X(t)=|X(t)|e^{i\phi(t)}\]
for a continuous real-valued lift $\phi$ of its argument, then
\[\VA(X)=\V_{[a,b]}\phi.
\]
The definition of $\VA$ remains meaningful when
$X$ vanishes: the zero values themselves are simply omitted, while
nonzero values lying on opposite sides of a zero set may still be
compared in the ordered sum.

For a nonvanishing regular curve $X$, $\VA(X)$ is the length of its spherical projection, while its total absolute curvature $\T(X)$ is the length of the spherical projection of its tangent vectors.
\end{remark}

\begin{lemma}\label{L1} Let $X:[a,b]\to\mathbb C$ be a nonconstant map such that
\[X(a)=X(b)=0.\]
Then
\[\T(X)\ge\pi+\VA(X).\]
\end{lemma}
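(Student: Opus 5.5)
The plan is to reduce both suprema to a single finite polygon and then prove an elementary inequality for polygons that start and end at the origin. Fix parameters $a\le t_0<\cdots<t_m\le b$ with $X(t_j)\neq0$, as in Definition~\ref{D2}. Add $a$ and $b$ to obtain a partition $P$ of $[a,b]$; the set may need to be merged with $\{a,b\}$, which is harmless since $X(a)=X(b)=0$. After deleting consecutive repeated vertices, the chain of Definition~\ref{D1} is $0=x_0,x_1,\dots,x_N=0$. It then suffices to show
\[\T_P(X)\ \ge\ \pi+\sum_{j=1}^{m}\ang\bigl(X(t_{j-1}),X(t_j)\bigr),\]
since taking suprema then gives the lemma. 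If $X$ has only one nonzero value, the right-hand side is just $\pi$. In that case the chain $0,x,0$ already has turning $\pi$, so this case is trivial. Nonconstancy guarantees that there is at least one nonzero value.

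\textbf{Core case: no interior zeros.} Suppose $x_1,\dots,x_{n}$ are all nonzero and $x_0=x_{n+1}=0$. Put $e_j=x_{j+1}-x_j$, so $e_0$ points along $x_1$ and $e_n$ along $-x_n$. For $1\le j\le n-1$ set
\[\theta_j=\ang(x_j,x_{j+1}),\qquad \varphi_j=\ang(x_j,e_j),\qquad \psi_j=\ang(x_{j+1},e_j).\]
The key identity is $\varphi_j=\theta_j+\psi_j$. For a nondegenerate triangle $0,x_j,x_{j+1}$, the exterior angle at $x_j$ equals the sum of the interior angles at $0$ and at $x_{j+1}$. For collinear configurations I would check the cases $x_{j+1}=tx_j$ with $t>1$, $0<t<1$ and $t<0$ directly.

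\textbf{Telescoping.} At the interior vertex $x_{j+1}$, the triangle inequality for $\ang$ on the circle gives
\[\ang(e_j,e_{j+1})\ge \varphi_{j+1}-\psi_j .\]
At the endpoints there is equality: the first turning is $\ang(e_0,e_1)=\varphi_1$, and the last is $\ang(e_{n-1},-x_n)=\pi-\psi_{n-1}$. Summing, the sum telescopes to
\[\T_P\ \ge\ \pi+\sum_{j=1}^{n-1}(\varphi_j-\psi_j)=\pi+\sum_{j=1}^{n-1}\theta_j,\]
which is what is needed.

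\textbf{Interior zeros.} If the chain passes through $0$ at interior vertices, I split it at those zeros into $k\ge1$ subchains. Each subchain starts and ends at $0$, and the turning at a split vertex is $\ge0$. Applying the core case to each subchain gives a total of at least $k\pi$ plus all angle terms between consecutive nonzero points within the same subchain. The only angle terms of $\VA$ not yet accounted for compare nonzero values on opposite sides of a zero. There are at most $k-1$ of these, and each is at most $\pi$. They are therefore absorbed by the surplus $(k-1)\pi$.

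The main obstacle is bookkeeping rather than depth. One must make sure the identity $\varphi_j=\theta_j+\psi_j$ holds in all degenerate collinear cases, and that deletion of repeated vertices and the insertion of $a,b$ do not interfere with the ordered sum in Definition~\ref{D2}. I would verify the degenerate cases of the identity first.
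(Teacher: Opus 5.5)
Your proposal is correct and is essentially the paper's proof: your exterior-angle identity $\varphi_j=\theta_j+\psi_j$ is the angle sum in the triangle $0,x_j,x_{j+1}$, and your estimate $\ang(e_j,e_{j+1})\ge\varphi_{j+1}-\psi_j$ is exactly the paper's $\ang(e_{i-1},e_i)\ge\pi-(\gamma_{i-1}+\beta_i)$. Your direct check of the collinear cases replaces the paper's perturbation argument, and the \emph{interior zeros} step is harmless but unnecessary: the partition you build consists only of $a$, $b$ and parameters where $X\neq0$, so the chain never returns to $0$ at an interior vertex.
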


\begin{proof}
We first prove the corresponding statement for a polygonal chain. Let
\[0=x_0,x_1,\ldots,x_N,x_{N+1}=0,\]
where $x_1,\ldots,x_N\neq0$.  Consecutive repeated vertices may be deleted, so we may assume that $x_i\neq x_{i+1}$. If $N=1$, $\T(0,x_1,0)=\pi$, so the assertion is immediate. Assume now $N\ge 2$. For $i=1,\ldots,N-1$ consider the triangle with vertices $0,x_i,x_{i+1}$.  Denote its angles by $\alpha_i,\beta_i,\gamma_i,$ where $\alpha_i$ is the angle at $0$, $\beta_i$ is the angle at $x_i$, and $\gamma_i$ is the angle at $x_{i+1}$.  Thus
\[\alpha_i=\ang(x_i,x_{i+1}).\]
Let
\[e_0=x_1,\qquad e_i=x_{i+1}-x_i\quad(1\le i\le N-1),\qquad e_N=-x_N
\]
be the successive edge vectors of the polygon. At the first vertex $x_1$, the turning angle is
\[
    \ang(e_0,e_1)=\pi-\beta_1.
\]
At an interior vertex $x_i$, $2\le i\le N-1$, we get
\[\ang(e_{i-1},e_i)\ge \pi-(\gamma_{i-1}+\beta_i).
\]
Finally, at $x_N$,
\[\ang(e_{N-1},e_N)=\pi-\gamma_{N-1}.\]
Adding these inequalities yields
\[\T (0,x_1,\ldots,x_N,0)\ge
    N\pi-\sum_{i=1}^{N-1}(\beta_i+\gamma_i)=\pi+
    \sum_{i=1}^{N-1}
    \ang(x_i,x_{i+1}).
    \]
Degenerate collinear triangles (i.e. $x_i$ and $x_{i+1}$ lie on the same line through the origin) are handled by a small perturbation of one of the vertices and taking the limit.

Now let $X$ be a nonconstant map with $X(a)=X(b)=0$. Choose any finite ordered set $a<t_1<\cdots<t_N<b $ such that $X(t_i)\neq 0$. Set $x_i=X(t_i)$ to get the polygonal chain
\[
    0=X(a),\,X(t_1),\ldots,X(t_N),\,X(b)=0.
\]
Therefore, by the definition of total absolute curvature 
\[
    \T(X) \ge \T \left(0,X(t_1),\ldots,X(t_N),0\right)\\
    \ge \pi+\sum_{i=1}^{N-1} \ang\left(X(t_i),X(t_{i+1})\right).
\]
The points $t_1,\ldots,t_N$ were arbitrary.  Taking the supremum
over all finite ordered sets of parameters at which $X$ is nonzero
gives
\[\T(X)\ge \pi+\VA(X).\]
Since $X$ is nonconstant and $X(a)=X(b)=0$, there is at least one
$t\in(a,b)$ with $X(t)\neq0$, so the argument also covers the case
in which $\VA(X)=0$.
\end{proof}

\begin{lemma}\label{L2}
Let $\gamma:[a,b]\to\mathbb C$ be a regular $\cC^2$ curve, and let $G\in BV([a,b],\mathbb C)$ with $\VA(G)<\infty$. Define
\[
H(t)=\int_a^t G(s)\,d\gamma(s)
     =\int_a^t G(s)\gamma'(s)\,ds.
\]
Then
\[\T(H)\le \VA(G)+\T(\gamma).\]
\end{lemma}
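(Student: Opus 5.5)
Heuristically, $H'=G\gamma'$, so $\arg H'=\arg G+\arg\gamma'$ and the turning of $H$ is at most the variation of $\arg G$ plus the turning of $\gamma'$. Since $G$ is only $BV$ and may vanish or jump, I will make this rigorous at the level of polygons. The plan is to bound $\T_P(H)$ for every partition $P$ and then take the supremum.

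\textbf{Step 1: edge directions lie near products.} Fix a partition $a=t_0<\cdots<t_m=b$ and the edge vectors
\[
e_j=H(t_j)-H(t_{j-1})=\int_{t_{j-1}}^{t_j}G(s)\gamma'(s)\,ds .
\]
Delete zero edges; they correspond to repeated vertices. The natural approach is to refine the partition. Refinement does not decrease $\T_P$, by the triangle inequality for $\ang$, which is a metric on directions in $S^1$. So it suffices to bound $\T_P(H)$ for fine partitions. On a short interval $[t_{j-1},t_j]$, the direction of $\gamma'$ is nearly constant by $\cC^2$ regularity. The key estimate to aim for is that there are sample points $s_j\in[t_{j-1},t_j]$ with $G(s_j)\ne0$ such that
\[
\ang\bigl(e_j,\,G(s_j)\gamma'(s_j)\bigr)\le \epsilon_j,
\]
with $\sum\epsilon_j$ small. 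The choice of $s_j$ requires care: $e_j$ is an average of $G\gamma'$. Averages of vectors whose directions lie in a short arc of $S^1$ have directions in that same arc. So $\arg(e_j/\gamma'(\sigma))$ lies, up to $o(1)$, in the convex angular hull of $\{\arg G(s)\}_{s\in[t_{j-1},t_j]}$.

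\textbf{Step 2: comparing consecutive edges.} Apply the triangle inequality for $\ang$:
\[
\ang(e_j,e_{j+1})\le \ang\bigl(e_j/\gamma'_j,\;e_{j+1}/\gamma'_{j+1}\bigr)+\ang(\gamma'_j,\gamma'_{j+1}),
\]
where $\gamma'_j$ is $\gamma'$ at a point of the $j$-th interval. This uses rotation invariance, $\ang(uv,wz)\le\ang(u,w)+\ang(v,z)$. The second terms sum to at most $\T(\gamma)$, because the turning of the unit tangent is controlled by $\int|\kappa|$. For the first terms, set $g_j=e_j/\gamma'_j\approx\int_{I_j}G\,ds\cdot(\text{small rotation})$. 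I then need
\[
\sum_j\ang(g_j,g_{j+1})\le\VA(G)+o(1).
\]

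\textbf{Step 3: the main obstacle.} The difficulty is bounding the angular variation of averages of $G$ over consecutive intervals by $\VA(G)$. Here is the plan. The direction of $\int_{I_j}G$ lies in the closed angular hull of the directions of $G$ on $I_j$. This hull is an arc of length at most $\VA(G|_{I_j})<\pi$; shorter arcs can be arranged by refining, or else treated separately. Consequently, $\ang(g_j,g_{j+1})$ is at most the angular diameter of the union of the hulls over $I_j\cup I_{j+1}$. A chain argument then bounds $\sum_j\ang(g_j,g_{j+1})$ by $\VA(G)$. To carry this out, pick $s_j\in I_j$ and $s'_j\in I_j$ realising the extreme directions, and order them monotonically. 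I expect this to be the technical heart of the proof.

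One issue is overcounting, since each interval is used twice. I would handle it by alternately comparing $g_j$ with $G(u_j)$ for a single point $u_j$, at the cost of the oscillation within $I_j$. The total of these oscillations is at most the variation of $\arg G$ on disjoint intervals, so it fits into a single copy of $\VA(G)$, provided the estimates are set up monotonically along the parameter. A cancellation in $\int_{I_j}G$, i.e.\ an average near $0$, is the delicate case. In that case $e_j$ may be small but nonzero. Its direction is still within the hull, so the estimate persists.

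Finally, letting the mesh tend to $0$ and taking the supremum over $P$ gives $\T(H)\le\VA(G)+\T(\gamma)$.
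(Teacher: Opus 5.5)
Your proposal leaves the crux unproved. The key estimate of Step 1 (sample points with $\sum_j\epsilon_j$ small) is only asserted. Step 3 is announced as the technical heart but only sketched, and the mechanisms sketched there do not deliver the bound.

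\emph{Overcounting.} Knowing that the direction of $g_j$ lies in an angular hull gives only $\ang(g_j,G(u_j))\le\omega_j$, where $\omega_j$ is the width of that hull. The sum $\sum_j\omega_j$ does not shrink under refinement: for $G(s)=e^{is}$ on $[0,1]$ and $\gamma$ a segment, $\sum_j\omega_j=1=\VA(G)$ for every partition. So ``comparing $g_j$ with $G(u_j)$ at the cost of the oscillation'' gives $\VA(G)+2\sum_j\omega_j$, i.e.\ up to $3\VA(G)$. The terms $\ang(G(u_j),G(u_{j+1}))$ already use the variation on $[u_j,u_{j+1}]$, and you pay the oscillation on the same intervals again. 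The claim that everything ``fits into a single copy of $\VA(G)$'' is therefore unjustified.

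\emph{Step 2 target.} The intermediate target $\sum_j\ang(g_j,g_{j+1})\le\VA(G)+o(1)$ is false when $\gamma'_j$ is an arbitrary point value. Take $G\equiv1$, $\gamma$ a circular arc of curvature $\kappa$ parametrized by arclength, $\gamma'_j$ at left endpoints, and interval lengths alternating between $\ell$ and $\ell'$. Then $\arg g_j=\kappa|I_j|/2$, and $\sum_j\ang(g_j,g_{j+1})\to\T(\gamma)$ as $\ell/\ell'\to0$, for arbitrarily small mesh, although $\VA(G)=0$.

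\emph{The case you set aside is real.} Take $G=\mathbf{1}_{[a,c)}-\mathbf{1}_{[c,b]}$, so $\VA(G)=\pi$, and $I_j=[c-\delta,c+\delta]$. Then $e_j=2\gamma(c)-\gamma(c-\delta)-\gamma(c+\delta)=-\delta^2\gamma''(c)+o(\delta^2)$. For an arclength parametrization this is orthogonal to $\gamma'(c)$. So $g_j$ is orthogonal to both directions taken by $G$, and its direction is set by the curvature of $\gamma$, not by any hull of $\arg G$.

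What is missing is a way to charge the internal variation of each interval exactly once, applied to the product $h=G\gamma'$ rather than to $G$ and $\gamma'$ separately. Your instinct to use extreme directions points the right way, but it has to be combined with endpoint limits, not with a single reference point. One way to close the gap:
\begin{enumerate}[label=(\roman*)]
\item Refine so that the large jumps in the direction of $h$ sit at partition points, and the nonzero values of $h$ on each open interval $(t_{j-1},t_j)$ have directions in an arc shorter than $\pi$.
\item Split each $\ang(e_j,e_{j+1})$ at the shared vertex $t_j$ through the one-sided limit directions of $h$ there.
\item Let $\phi$ be a lift of $\arg h$ on $(t_{j-1},t_j)$ with endpoint limits $A,B$. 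Then $\arg e_j$ lies in $[\inf\phi,\sup\phi]$. On this interval the convex function $u\mapsto|A-u|+|u-B|$ attains its maximum at an endpoint, hence is bounded by the variation of $\phi$ there.
\end{enumerate}
Summing gives $\T_P(H)\le\VA(h)$. Then $\VA(h)\le\VA(G)+\T(\gamma)$ follows from $\ang(u_1v_1,u_2v_2)\le\ang(u_1,u_2)+\ang(v_1,v_2)$ at common sample points.

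The paper avoids averages altogether. It approximates $h$ in $L^1$ by tagged step functions, whose primitives are polygons with edge directions exactly $h(\xi_j)$ in chronological order. Their curvature is therefore at most $\VA(h)$ by definition, and the paper passes to $H$ by lower semicontinuity of $\T$ under uniform convergence.
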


\begin{proof} If $G$ is $\cC^1$ and nonvanishing, the assertion follows immediately from $H'=G\gamma'$ and the subadditivity of variation of angle:
\[\T(H)=\VA(H')\le \VA(G)+\VA(\gamma')=\VA(G)+\T(\gamma).\]
The proof below deals with the additional difficulties caused by zeros
and discontinuities of $G$.

Set
\[
h(t)=G(t)\gamma'(t).
\]
Since $G\in BV$, it is regulated and bounded; since $\gamma'$ is continuous, $h$ is regulated and bounded. Hence $h$ is Riemann integrable, since it is a uniform limit of step functions, and
\[H(t)=\int_a^t h(s)\,ds\]
is Lipschitz, and thus absolutely continuous and rectifiable.

We first show that  $\T(H)\le \VA(h)$. Since $h$ is regulated, for every $n$ there is a partition
\[
a=t_0^{(n)}<\cdots<t_{N_n}^{(n)}=b
\]
such that the oscillation of $h$ on each open subinterval is at most $1/n$; see \cite{F}. Choose tags
$\xi_j^{(n)}\in(t_{j-1}^{(n)},t_j^{(n)})$ and define the tagged step function
\[h_n(t)=h(\xi_j^{(n)})
\quad\text{for }t\in(t_{j-1}^{(n)},t_j^{(n)}],\quad h_n(a)=h(\xi_1^{(n)}).\]
Then
\[\|h_n-h\|_{L^1([a,b])}\longrightarrow 0.\]
Let
\[
H_n(t)=\int_a^t h_n(s)\,ds.
\]
The map $H_n$ is polygonal. Its edge on $(t_{j-1}^{(n)},t_j^{(n)})$ is
\[
\left(t_j^{(n)}-t_{j-1}^{(n)}\right)h(\xi_j^{(n)}).
\]
If $h(\xi_j^{(n)})=0$, this is a zero edge and is deleted when computing total curvature. After all zero edges are deleted, the remaining edge directions are precisely the directions of the nonzero values
\[
h(\xi_{j_1}^{(n)}),\ldots,h(\xi_{j_r}^{(n)})
\]
in their original chronological order. Therefore, by the definition of $\VA(h)$,
\begin{equation}\label{E1}\T(H_n)\le \VA(h)
\qquad\text{for every }n.\end{equation}
Moreover,
\begin{equation}\label{E2}\|H_n-H\|_\infty \le \|h_n-h\|_{L^1}\longrightarrow0.\end{equation}
Fix any inscribed polygon of $H$, determined by
\[a= s_0<\cdots<s_m= b,\] and, as usual, delete consecutive repeated vertices. Thus its chord vectors
\[v_j=H(s_j)-H(s_{j-1})\]
are nonzero. For the same parameter values set
\[v_j^{(n)}=H_n(s_j)-H_n(s_{j-1}).\]
By (\ref{E2}), $v_j^{(n)}\to v_j$. Hence, for all large $n$, the $v_j^{(n)}$ are nonzero and
\[\ang\left(v_j^{(n)},v_{j+1}^{(n)}\right)\longrightarrow \ang(v_j,v_{j+1}).
\]
Therefore the curvature of this fixed inscribed polygon of $H$ is at most
\[\liminf_{n\to\infty}\T(H_n)\le \VA(h)\]
by (\ref{E1}). Taking the supremum over all inscribed polygons of $H$ gives
\[\T(H)\le \VA(h).\]

We now show that $\VA(h)\le \VA(G)+\T(\gamma)$.
Take any ordered finite set
\[a\le t_0<\cdots<t_m\le b \]
with $h(t_j)\ne0$. Since $\gamma$ is regular, $\gamma'(t_j)\ne0$, and therefore also $G(t_j)\ne0$. For nonzero complex numbers the angular distance satisfies
\[
\ang(u_1v_1,u_2v_2)\le \ang(u_1,u_2)+\ang(v_1,v_2).\]
Thus
\begin{align*}
\sum_{j=1}^m \ang\left(h(t_{j-1}),h(t_j)\right)
&\le
\sum_{j=1}^m \ang\left(G(t_{j-1}),G(t_j)\right)\\
&\quad+
\sum_{j=1}^m \ang\left(\gamma'(t_{j-1}),\gamma'(t_j)\right)\\
&\le \VA(G)+\T(\gamma).
\end{align*}
Taking the supremum over all such ordered finite sets gives
\[
\VA(h)\le \VA(G)+\T(\gamma).
\]
This completes the proof.
\end{proof}

\begin{lemma}\label{L3}
Let
\[X:[a,b]\longrightarrow \C\backslash\{0\}
\]
be a regular $\cC^2$ curve. Then
\[\VA(X)<\T(X)+\pi.\]
\end{lemma}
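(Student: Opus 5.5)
We write $X(t)=r(t)e^{i\phi(t)}$ with a continuous (indeed $\cC^2$) lift $\phi$. By Remark~\ref{R1}, $\VA(X)=\V_{[a,b]}\phi=\int_a^b|\phi'|\,dt$. Similarly, $X'(t)=|X'(t)|e^{i\theta(t)}$ with a $\cC^1$ lift $\theta$, and $\T(X)=\int_a^b|\theta'|\,dt$. The strategy is to bound the variation of $\phi$ on each maximal interval where $\phi$ is monotone by the turning of the tangent, plus a loss that occurs only once overall.

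The key relation is between the polar angle and the tangent angle. Set $\psi=\theta-\phi$, the angle between the tangent $X'$ and the radial direction $X$. Since $X'=(r'+ir\phi')e^{i\phi}$, we have $\psi=\arg(r'+ir\phi')$. Thus $\phi'>0$ exactly when $\sin\psi>0$, that is, when $\psi\in(0,\pi)$ mod $2\pi$. Because $r>0$ and $X'\neq 0$, the function $\psi$ is continuous.

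Now fix an interval $I=[s,u]$ on which $\phi'\ge 0$. There $\psi(t)\in[0,\pi]$ mod $2\pi$, and after choosing the lift we have $\psi(I)\subset[0,\pi]$. Then
\[\int_I\phi'=\int_I\theta'-(\psi(u)-\psi(s))\le\int_I|\theta'|+|\psi(u)-\psi(s)|\le \int_I|\theta'|+\pi.\]
This gives the bound with a loss of at most $\pi$ on each monotone piece. To avoid losing $\pi$ many times, the pieces have to be linked together. At a switch from increasing to decreasing $\phi$, $\psi$ passes through $0$ or $\pi$ (mod $2\pi$), meaning the tangent is radial. The plan is to do the bookkeeping globally rather than piece by piece. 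Consider the function $F(t)=\int_a^t|\phi'|$ and compare it with $\int_a^t|\theta'|$ via the quantity $\tilde\psi$, which equals $\psi$ on increasing pieces and $-\psi$ (suitably reflected) on decreasing pieces. Concretely, $|\phi'|=\pm(\theta'-\psi')$, and the claim is that one can choose signs $\epsilon(t)\in\{\pm1\}$ with $|\phi'|=\epsilon\theta'-\epsilon\psi'$, where $g=\epsilon\psi$ (taken mod $2\pi$ with a continuous choice) stays in a fixed interval of length $\pi$ for all $t$. On increasing pieces $\psi\in[0,\pi]$. On decreasing pieces $\psi\in[-\pi,0]$, so $-\psi\in[0,\pi]$. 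At switching points $\psi\in\{0,\pm\pi\}$, and $-\psi\equiv\psi$ mod the relevant identification there: $0\mapsto0$, and $\pi\mapsto-\pi\equiv\pi$ mod $2\pi$. So $g$ is a continuous function with values in $[0,\pi]$, piecewise equal to $\pm\psi$. Hence $\int_a^b|\phi'|\le\int_a^b|\theta'|+|g(b)-g(a)|\le\T(X)+\pi$, assuming finitely many switches. The general case follows from an approximation argument, or by working directly with the absolutely continuous $g$, since $|g'|=|\psi'|$ a.e.

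The main obstacle is strictness. Equality would force $|g(b)-g(a)|=\pi$ and $\epsilon\theta'=|\theta'|$ a.e. So equality requires $g(a)=0$ and $g(b)=\pi$ (or the reverse): at the start the tangent is radial, pointing outward or inward, and at the end it is radial in the other sense. It also requires $\phi'\ne0$ somewhere, since otherwise $\VA(X)=0<\pi$. I would then show this configuration cannot give equality. Near $t=a$ we have $g$ close to $0$ while the integrand identity is exact, so I would try a local second-order argument. Note that $\int|\phi'|=\int\epsilon\theta'-[g]$ holds exactly, so equality needs $\epsilon\theta'\ge0$ everywhere. Combined with the endpoint conditions, this seems to force a curve tracing a ray, which would give $\VA=0$ and contradict equality. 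If this step resists, I would first check the extremal candidates, such as a spiral-free curve turning by $\pi$ around the origin, to see whether equality is ever attained. If it is, the statement would need $\le$, and in that case I would expect the paper's later uses to require only the non-strict form.
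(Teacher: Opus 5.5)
Your non-strict argument is the paper's. Your $g=\epsilon\psi$ is exactly $q=\ang(X,X')\in[0,\pi]$. Your identity $|\phi'|=\epsilon\theta'-g'$ is the paper's pointwise bound $|\phi'|+q'\le|\theta'|$ a.e. The count of switches is avoided by noting that $\phi'=0$ and $q'=0$ a.e.\ on $\{q\in\{0,\pi\}\}$. Integrating gives $\VA(X)\le\T(X)+q(a)-q(b)\le\T(X)+\pi$.

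However, the strict inequality is the entire content of the lemma beyond the already known non-strict estimate, and you do not prove it. Your heuristic that equality ``forces a curve tracing a ray'' is not an argument. Your orientation is also off: equality forces $g(a)=\pi$ and $g(b)=0$, not $g(a)=0$. So the tangent initially points \emph{at} the origin, and a local analysis ``near $t=a$ with $g$ close to $0$'' studies the wrong configuration.

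Your fallback is wrong as well. In the proof of Theorem~\ref{T1}, $\T(F_0)<\alpha+\pi$ is combined with $\T(F_0)\ge d\pi-(d-1)\alpha$ to get $\alpha>\frac{d-1}{d}\pi$. With only $\le$ you would get $\alpha\ge\frac{d-1}{d}\pi$, which does not exclude the boundary case $\T(K)=\frac{d-1}{d}\pi$ that the theorem explicitly covers.

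The missing idea is as follows. Since $q$ may equal $\pi$ on a whole interval (radial motion toward $0$), you must localize away from $a$. Set $v=\inf\{q=0\}$ and $u=\sup\{s<v:\ q(s)=\pi\}$, so that $q(u)=\pi$, $q(v)=0$, and $0<q<\pi$ on $(u,v)$. After replacing $X$ by $\bar X$, you may assume $\phi'>0$ there. Equality a.e.\ then forces $\theta'=\phi'+q'=|\theta'|\ge 0$. With $\varepsilon=\pi-q$ this reads $\varepsilon'\le\phi'$ a.e.

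In arclength, $r\phi'=\sin q=\sin\varepsilon\le\varepsilon$, so $\varepsilon'\le\varepsilon/\min r$ a.e.\ with $\varepsilon(u)=0$. Gronwall applied to the absolutely continuous function $\varepsilon$ gives $\varepsilon\equiv 0$ on $[u,v)$, contradicting $\varepsilon>0$ there. The paper reaches the same contradiction differently: it writes $\frac{d\log r}{d\phi}=-\cot\varepsilon\le-\frac{1}{2(\phi-\phi(u))}$, and the divergent integral would make $r$ unbounded as $s\downarrow u$.

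Geometrically, a curve heading straight at the origin can turn away from the radial direction only as fast as its polar angle moves, and the polar angle moves only at rate $O(\varepsilon)$. Hence equality is never attained and there is no extremal example to look for.
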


\begin{proof}
Reparametrize $X$ by arclength, so that $|X'(s)|=1$. Since
$X(s)\neq 0$, we may write
\[
    X(s)=r(s)e^{i\phi(s)},
    \qquad
    X'(s)=e^{i\theta(s)},
\]
where $r(s)>0$ and $\phi,\theta$ are continuous and real-valued. Since $X$ is $\cC^2$, we may take $\phi,\theta\in \cC^1$. Moreover,
\[\VA(X)=\int_a^b |\phi'(s)|\,ds,\qquad \T(X)=\int_a^b |\theta'(s)|\,ds.\]
Let
\[q(s)=\ang\left(X(s),X'(s)\right)\in[0,\pi]\]
be the smaller angle between the radial direction and the tangent direction. The function $q$ is Lipschitz and thus absolutely continuous. Let $\psi$ denote locally the signed angle from the radial direction to the tangent direction. Thus, wherever $0<q<\pi$, $\psi=q$ or $\psi=-q.$
Differentiating $X=re^{i\phi}$ and using $|X'|=1$ gives the polar-coordinate identities
\begin{equation}
	r'=\cos\psi,\qquad r\phi'=\sin\psi.
    \label{E11}
\end{equation}
We first prove the non-strict estimate. Suppose that
$0<q(s)<\pi$. If $\sin\psi>0$, then $\psi=q$, and by (\ref{E11}),
$\phi'>0$. Since $\theta=\phi+q$
locally, we have $\theta'=\phi'+q'.$
Hence
\begin{equation}
	|\theta'|\ge \theta'=|\phi'|+q'.
  \label{E12}
\end{equation}
If $\sin\psi<0$, then $\psi=-q$, and therefore $\phi'<0$. Now $\theta=\phi-q$,
so $\theta'=\phi'-q'.$
Consequently,
\begin{equation}|\theta'|\ge -\theta'=|\phi'|+q'.
    \label{E13}
\end{equation}
At points where $q=0$ or $q=\pi$, equation (\ref{E11}) gives
$\phi'=0$. Since an absolutely continuous function has derivative
zero almost everywhere on each of its level sets, we also have
$q'=0$ almost everywhere on $\{q=0\}\cup\{q=\pi\}.$
Thus (\ref{E12}) and (\ref{E13}) imply
\begin{equation}|\phi'|+q'\le |\theta'|\qquad\text{a.e. on }[a,b].
    \label{E14}
\end{equation}
Integrating (\ref{E14}), we obtain

\begin{equation}
    \VA(X)=\int_a^b |\phi'|\,ds\le\int_a^b|\theta'|\,ds+q(a)-q(b)\le \T(X)+\pi .
\label{E15}
\end{equation}

It remains to prove that equality cannot occur. Assume, to the contrary, that

\begin{equation}\VA(X)=\T(X)+\pi.
\label{E16}
\end{equation}
Equality in (\ref{E15}) then forces $q(a)=\pi,\,q(b)=0,$
and equality in (\ref{E14}) for almost every $s$. Let
\[v=\inf\{s\in[a,b];\,q(s)=0\},\]
and let
\[u=\sup\{s\in[a,v);\,q(s)=\pi\}.
\]
By continuity of $q$, we have
\begin{equation*} q(u)=\pi,\qquad q(v)=0,\qquad 0<q(s)<\pi \quad (u<s<v).
   % \label{E18}
\end{equation*}
On $(u,v)$, the sign of $\sin\psi$ is constant. After replacing $X$ by $\bar X$ if necessary, we may assume $\sin\psi>0$ on $(u,v)$. Hence $\psi=q$ and $ \phi'>0$ on $(u,v).$ Put $\varepsilon=\pi-q$.
Then $\varepsilon(u)=0$ and  $0<\varepsilon<\pi$ on $(u,v).$
Since equality holds in (\ref{E12}) almost everywhere, we have $\theta'\ge0 $ almost everywhere. But $\theta'=\phi'+q'=\phi'-\varepsilon',$
so
\begin{equation*}
	\varepsilon'\le \phi'\qquad\text{a.e. on }(u,v).
 %\label{E19}
\end{equation*}
Integrating from $u$ gives
\begin{equation}0<\varepsilon(s)\le\phi(s)-\phi(u).
    \label{E110}
\end{equation}
From (\ref{E11}), since $q=\pi-\varepsilon$,
\[r'=\cos q=-\cos\varepsilon,\qquad r\phi'=\sin q=\sin\varepsilon.\]
Since $\phi'>0$ on $(u,v)$, the function $\phi$ is strictly increasing
there. We may therefore use $\phi$ as a parameter and regard $r$ and
$\varepsilon$ as functions of $\phi$. Therefore
\begin{equation}\frac{d\log r}{d\phi}=\frac{r'}{r\phi'}=-\cot\varepsilon.
    \label{E111}
\end{equation}
For sufficiently small $\varepsilon>0$,
\[\cot\varepsilon\ge \frac{1}{2\varepsilon}.\]
Using (\ref{E110}), for $s$ sufficiently close to $u$ we obtain
\begin{equation}\frac{d\log r}{d\phi}\le-\frac{1}{2\varepsilon}\le-\frac{1}{2\left(\phi-\phi(u)\right)}.
    \label{E112}
\end{equation}

Indeed, integrating (\ref{E112}) from $\phi(u)+\delta$ to a fixed
$\phi_1>\phi(u)$, chosen sufficiently close to $\phi(u)$, gives
\[
\log r(\phi_1)-\log r\bigl(\phi(u)+\delta\bigr)
\le
-\frac12
\int_{\phi(u)+\delta}^{\phi_1}
\frac{d\phi}{\phi-\phi(u)}
=
-\frac12
\log\frac{\phi_1-\phi(u)}{\delta}.
\]
As $\delta\downarrow0$, the right-hand side tends to $-\infty$,
whereas continuity of $r$ and $0<r(u)<\infty$ imply
\[
\log r\left(\phi(u)+\delta\right)\longrightarrow \log r(u),
\]
so the left-hand side converges to the finite number
\[\log r(\phi_1)-\log r(u).\]
This is a contradiction. Therefore
\[\VA(X)<\T(X)+\pi.\]
\end{proof}
The non-strict version of this estimate appears in \cite[Theorem~4]{KY}. In the planar case, their estimate gives $\VA(X)\le \T(X)+q(a)-q(b)\le \T(X)+\pi,$ where $q(t)$ denotes the angle between $X(t)$ and $X'(t)$. The point needed here is that, for a regular $\cC^2$ curve avoiding the origin, the last inequality is in fact strict. Related radial-projection estimates also appear in \cite[Section~8]{EWW}.

\section{A curvature criterion for polynomial convexity}

The proof is based on an iterative construction. The moment conditions
are used to construct successive primitives $F_j$, while
Lemmas~\ref{L1}--\ref{L3} control their angular variation and total
curvature. Related iterative arguments involving variation of angle
and curvature appear in \cite[Section~3.2]{KY}.

\begin{proof}[Proof of Theorem \ref{T1}] If $d=1$, the hypothesis gives $\T(K)=0$, so $K$ is a line segment and $\cP_1(K)=\Conv(K)=K$. We may therefore assume that $d>1$.

Let $\alpha=\T(K),$ and let $\gamma:[a,b]\longrightarrow K$ be an arc-length parametrization of $K$. Suppose that $w\in \cP_d(K)\backslash K.$ Using translation, we may assume $w=0$. In particular, $\gamma(t)\neq 0$  for $a\le  t\leq b$.
By Lemma \ref{L},
\[\nu_d(0)=0\in\operatorname{conv}\nu_d(K).
\]
Applying Carath\'eodory's theorem, we obtain distinct points
\[z_1,z_2,\ldots,z_N\in K, \qquad N\leq 2d+1,
\]
and positive weights
\[
    \lambda_1,\lambda_2,\ldots,\lambda_N>0,
    \qquad
    \sum_{r=1}^N\lambda_r=1,
\]
such that
\begin{equation}
    \sum_{r=1}^N\lambda_r z_r^k=0,\qquad k=1,\ldots,d.
    \label{E31}
\end{equation}
Since $\gamma$ is injective, after reordering the points according to
their occurrence along the arc, we may write
\[z_r=\gamma(t_r),\qquad a\leq t_1<\cdots<t_N\leq b.\]
Set
\[v_r=\lambda_r z_r,\qquad r=1,\ldots,N.\]
Since $0\notin K$, every $v_r$ is nonzero. Moreover, for
$k=0,\ldots,d-1$, equation \eqref{E31} gives
\begin{equation}
    \sum_{r=1}^N v_r z_r^k=\sum_{r=1}^N\lambda_r z_r^{k+1}=0.
    \label{E32}
\end{equation}
In particular,
\[\sum_{r=1}^N v_r=0.\]
Define a finite step map $F_0:\mathbb R\to\mathbb C$ by
\[F_0(t) = \begin{cases}
    0, & t\le a\\ 
    \displaystyle\sum_{t_r<t}v_r, & a<t<b \\
    0, & t\ge b.
\end{cases}
\]
Thus
\[F_0(a)=F_0(b)=0,\]
and the successive jumps of $F_0$ are
\[v_1,\ldots,v_N.\]
The equality $\sum_r v_r=0$ ensures that $F_0$ vanishes again after
the final jump. The distributional derivative of $F_0$ is the finite measure
\[dF_0=\sum_{r=1}^N v_r\,\delta_{t_r}.\]
Consequently, for every $f\in \cC^1([a,b])$,
\[\int_{[a,b]} f\,dF_0=\sum_{r=1}^N f(t_r)v_r=-\int_a^b F_0(t)f'(t)\,dt.\]
Taking $f=\gamma^k$, equation
\eqref{E32} becomes
\begin{equation}
    \int_{[a,b]}\gamma^k\,dF_0=0,
    \qquad
    k=0,\ldots,d-1.
    \label{E33}
\end{equation}
For $j=0,\ldots,d-2$, define recursively
\begin{equation} F_{j+1}(t)=\int_a^t F_j(s)\,d\gamma(s)=\int_a^t F_j(s)\gamma'(s)\,ds.
    \label{E34}
\end{equation}
Thus
\[
    dF_{j+1}=F_j\,d\gamma.
\]
The map $F_0$ is of bounded variation, and each $F_{j+1}$ is
Lipschitz, since $F_j$ is bounded and $\gamma'$ is continuous.
We claim that, for every $j=0,\ldots,d-1$,
\begin{equation}
    F_j(a)=F_j(b)=0
    \label{E35}
\end{equation}
and
\begin{equation}
    \int_{[a,b]}\gamma^k\,dF_j=0,
    \qquad
    k=0,\ldots,d-1-j.
    \label{E36}
\end{equation}
For $j=0$, these statements follow from the definition of $F_0$ and \eqref{E33}. Suppose that they hold for some $j\leq d-2$. For $k=0,\ldots,d-j-2,$ we obtain, using \eqref{E34} and integration by parts,
\begin{align}
    \int_{[a,b]}\gamma^k\,dF_{j+1}
    &=\int_a^b \gamma^k F_j\,d\gamma\nonumber\\
    &=\frac{1}{k+1}
    \int_a^b F_j\,d(\gamma^{k+1})\nonumber\\
    &=-\frac{1}{k+1}\int_{[a,b]}\gamma^{k+1}\,dF_j
    =0.
    \label{E37}
\end{align}
The boundary term vanishes because
\[F_j(a)=F_j(b)=0.\]
Taking $k=0$ in \eqref{E37} gives
\[F_{j+1}(b)-F_{j+1}(a)=\int_{[a,b]}dF_{j+1}=0.\]
Since $F_{j+1}(a)=0$ by definition, we obtain $F_{j+1}(b)=0$. This proves \eqref{E35} and \eqref{E36} by induction.

Every $F_j$ is nonconstant. Indeed, since every $v_r$ is nonzero and
\[\sum_{r=1}^N v_r=0,\]
we have $N\geq2$, and $F_0$ is nonzero on an open interval between
the first two jump points. If $F_j$ is nonzero on an open interval,
then
\[F_{j+1}'(t)=F_j(t)\gamma'(t)\]
almost everywhere on that interval. Since $\gamma$ is regular,
$\gamma'(t)\neq0$, and therefore $F_{j+1}$ is nonconstant. By
induction, every $F_j$ is nonconstant.

By Definition \ref{D1} and Remark \ref{R1}, the total curvature of the finite step map $F_0$ is the total curvature of the polygonal chain whose successive edge vectors are $v_1,\ldots,v_N.$
Therefore
\begin{equation*}\T(F_0)=\sum_{r=1}^{N-1}\ang(v_r,v_{r+1})=\sum_{r=1}^{N-1}\ang(z_r,z_{r+1})\le\VA(\gamma),
\end{equation*}
because $\lambda_r>0$ and $z_r=\gamma(t_r)$. Since $\gamma$ does not meet the origin, Lemma \ref{L3} gives
\begin{equation*}\VA(\gamma)<\T(\gamma)+\pi=\alpha+\pi.    
\end{equation*}
Combining the two inequalities, we obtain
\begin{equation}\T(F_0)<\alpha+\pi.
    \label{E38}
\end{equation}

Since each $F_j$ is nonconstant and satisfies
\[F_j(a)=F_j(b)=0,\]
Lemma \ref{L1} gives
\begin{equation} \T(F_j)\ge \pi+\VA(F_j),\qquad j=0,\ldots,d-1.
    \label{E39}
\end{equation}

The map $F_0$ is a finite step map, so both $\T(F_0)$ and $\VA(F_0)$ are finite. Lemma \ref{L2} applied to \eqref{E34} gives
\[\T(F_1)\le\VA(F_0)+\alpha<\infty.\]
Lemma \ref{L1} then implies that $\VA(F_1)<\infty$. Proceeding inductively, Lemma \ref{L2} may be applied at every stage. For $j=0,\ldots,d-2$, we get
\begin{equation}\T(F_{j+1})\le\VA(F_j)+\alpha.
    \label{E310}
\end{equation}
Combining \eqref{E39} and
\eqref{E310}, we obtain
\begin{equation*} \T(F_j) \ge \T(F_{j+1})+\pi-\alpha,\qquad j=0,\ldots,d-2.
   % \label{eq:curvature-descent}
\end{equation*}
Iterating yields
\[\T(F_0)\ge \T(F_{d-1})+(d-1)(\pi-\alpha).
\]
Finally, (\ref{E39}) gives
\[\T(F_{d-1})\geq\pi.\]
Hence
\begin{equation*}
    \T(F_0)\ge \pi+(d-1)(\pi-\alpha)=d\pi-(d-1)\alpha.
   \end{equation*}
Combining this inequality with \eqref{E38}, we obtain
\[d\pi-(d-1)\alpha < \alpha+\pi.
\]
Therefore
\[\alpha>\frac{d-1}{d}\pi.
\]
This contradicts the hypothesis
\[\T(K)=\alpha\le\frac{d-1}{d}\pi.\]
Thus no point of $\C\backslash K$ belongs to $\cP_d(K)$. Consequently, $K$ is $d$-polynomially convex.
\end{proof}

For $d=2$, the curvature criterion also admits a short elementary proof
that avoids the variation-of-angle machinery. We include it because it
gives a more direct geometric interpretation of this special case.

\begin{proof}[Alternative proof of Theorem \ref{T1} for $d=2$] 
Let $w\in\cP_2(K)$, where $\T(K)\le\frac{\pi}{2}$. Let $\gamma:[0,L]\to\C$ be an arc-length parametrization of $K$ and let $\Theta(s)$ be the continuous lift of the tangent angle. Since 
\[|\Theta(s)-\Theta(t)|\le \T(K)\le\frac{\pi}{2},\] we can, after rotating the curve, assume that
\[0\le \Theta(s)\le \frac{\pi}{2},\quad s\in[0,L].\] 
If \[\gamma(s)=x(s)+iy(s),\]
both 
\[x'(s)=\cos \Theta(s),\quad y'(s)=\sin \Theta(s)\]
are nonnegative and hence $x$ and $y$ are nondecreasing. So 
\begin{equation} \label{EP}(x(s)-x(t))(y(s)-y(t))\ge 0,\quad s,t\in[0,L].\end{equation}
By Lemma \ref{L}, $\nu_2(w)\in\Conv(\nu_2(K)),$
so we have finitely many points $z_j=x_j+iy_j\in K,$ and 
weights
\[\lambda_1,\lambda_2,\ldots,\lambda_N>0,\quad \sum_{j=1}^N\lambda_j=1,\] such that
\[w=\sum_{j=1}^N\lambda_jz_j,\quad w^2=\sum_{j=1}^N\lambda_jz_j^2.\] 
Writing $w=u+iv,$
we have 
\[u=\sum_{j=1}^N\lambda_{j}x_j,\quad v=\sum_{j=1}^N\lambda_{j}y_j,\quad uv=\sum_{j=1}^N\lambda_{j}x_jy_j.\]
By the covariance identity
\[0=\sum_{j=1}^N\lambda_{j}x_jy_j-\left(\sum_{j=1}^N\lambda_{j}x_j\right)\left(\sum_{j=1}^N\lambda_{j}y_j\right)=\frac{1}{2}\sum_{k,j=1}^N\lambda_k\lambda_j(x_k-x_j)(y_k-y_j).\]
Since the weights are positive, (\ref{EP}) implies that
\[(x_k-x_j)(y_k-y_j)=0,\quad j,k=1,\ldots,N.\] 
All points $z_j$ must therefore lie on one horizontal or one vertical line. If all points are equal, this is obvious. So let us assume $z_a\ne z_b$. If $y_a=y_b$ and $x_a\neq x_b$, let $z_j$ be another point. If $y_j\neq y_a$, then we must have both $x_a=x_j$ and $x_b=x_j$, which is not possible. So all points lie on the horizontal line $y=y_a$. The vertical case is analogous.

We now use the remaining identity
\[u^2-v^2=\sum_{j=1}^N\lambda_j(x_j^2-y_j^2)\]
or equivalently
\[\sum_{j=1}^N\lambda_j(x_j-u)^2=\sum_{j=1}^N\lambda_j(y_j-v)^2.\]
If all points lie on the same horizontal line, then  
\[\sum_{j=1}^N\lambda_j(x_j-u)^2=0\] 
and so $x_j=u$ for every $j$. Therefore $z_j=z_k$ for every $j,k$. A similar argument holds if all points lie on a vertical line. 
Since all the points $z_j$ coincide, $w\in K$. 
\end{proof}

\section{Sharpness of the curvature bound}

\begin{proof}[Proof of Theorem \ref{T2}] Let $\tau>\frac{d-1}{d}\pi$ and choose $\alpha$ so that
\[\max\left\{0,1-\frac{\tau}{\pi}\right\}<\alpha<\frac1d.\]
Let $\Phi:\R\to \C$ be defined as 
\[
\Phi(x)=\begin{cases}
    |x|^\alpha ,&x\ge 0 \\
    e^{i\pi\alpha}|x|^\alpha ,& x < 0.
\end{cases}
\]
Let $V_{\alpha}\subset\C$ be the image of $\Phi$ and  	
define the probability measure $\mu$ on $V_{\alpha}$ as the push-forward by $\Phi$ of the Cauchy probability measure 
\[d\rho=\frac{1}{\pi}\frac{1}{1+x^2}\,dx\]
on $\R$. Then, for $k=0,1,\ldots, d$, $\alpha k<1$, and we have 
\begin{align*} \int_{V_\alpha}z^k\,d\mu (z)&=\int_{\R}\Phi(x)^k\,d\rho(x)=\frac{1+e^{i\pi\alpha k}}{\pi}\int_0^\infty \frac{x^{\alpha k}}{1+x^2}\,dx\\
    &=\frac{1+e^{i\pi \alpha k}}{2\cos(\pi\alpha k/2)}=(e^{i\pi\alpha/2})^k.
\end{align*}
So, for $w=e^{i\pi\alpha/2}$, we have
\[\int_{V_\alpha}z^k\,d\mu (z)=w^k,\quad 0\le k\le d.\]
Since $\mu(\{0\})=0$, we may regard $\mu$ as a probability measure on $V_\alpha\backslash\{0\}$. By the Richter--Tchakaloff theorem \cite{R,T}, applied to $(V_{\alpha}\backslash\{0\},\mu)$ and the real functions 
\[\{1,\Real z,\Imag z,\ldots,\Real z^d,\Imag z^d\}\]
there exist finitely many nonzero points $z_1,z_2,\ldots,z_N\in V_{\alpha}$, with $N\le 2d+1$, and positive weights \[\lambda_1,\lambda_2,\ldots,\lambda_N>0,\quad \sum_{j=1}^{N}\lambda_j=1,\]
so that 
\[\sum_{j=1}^N\lambda_jz_j^k=w^k,\quad k=1,2,\ldots,d.\] 
By Lemma \ref{L}, $w\in\cP_d (\{z_1,z_2,\ldots,z_N\})$.
Let $K$ consist of finite portions of the two rays containing all the points $z_j$, joined by a $\cC^{\infty}$-smooth arc inside the wedge which avoids $w$. Orient the resulting arc by moving inward on the ray of argument $\pi\alpha$, then outward on the positive real ray. We can choose the connecting arc so that its tangent angle varies monotonically from the direction $(1+\alpha)\pi$ of the incoming ray to the direction $2\pi$ of the outgoing ray. 

\begin{figure}[H]
\begin{center}\begin{tikzpicture}[
    scale=0.6,
    every node/.style={font=\small},
    point/.style={circle,fill=black,inner sep=2pt},
    contour/.style={line width=1.3pt},
]

\coordinate (O)    at (0,0);
\coordinate (zk)   at (1.35,1.38);
\coordinate (zk1)  at (2.00,0);
\coordinate (z2)   at (3.45,3.50);
\coordinate (z1)   at (4.15,4.22);
\coordinate (top)  at (4.55,4.63);

\coordinate (w)    at (2.30,0.92);

\coordinate (zNm1) at (5.55,0);
\coordinate (zN)   at (7.15,0);

\draw[line width=.7pt] (-1.0,0) -- (8.0,0);
\draw[line width=.7pt] (0,-1.0) -- (0,5.2);

\draw[thin] (O) -- (zk);

\draw[
    contour,
    postaction={decorate},
    decoration={
        markings,
        mark=at position 0.48 with {\arrow{stealth}}
    }
]
(top) -- (zk);

\draw[contour]
(zk)
.. controls (0.95,0.85) and (1.05,0.10) ..
(zk1);

\draw[
    contour,
    postaction={decorate},
    decoration={
        markings,
        mark=at position 0.42 with {\arrow{stealth}}
    }
]
(zk1) -- (7.8,0);

\node[point] at (zk)   {};
\node[point] at (zk1)  {};
\node[point] at (z2)   {};
\node[point] at (z1)   {};
\node[point] at (w)    {};
\node[point] at (zNm1) {};
\node[point] at (zN)   {};

\node[right=4pt, below=1pt] at (z1) {$z_1$};
\node[right=4pt, below=2pt] at (z2) {$z_2$};

\node[right=4pt, below=3pt] at (zk) {$z_k$};
\node[below=7pt] at (zk1) {$z_{k+1}$};

\node[right=5pt] at (w) {$w$};

\node[below=8pt] at (zNm1) {$z_{N-1}$};
\node[below=8pt] at (zN) {$z_N$};
\node[above=7pt] at (4.25,0) {$K$};

\pic[
    draw,
    angle radius=0.7cm,
    angle eccentricity=0.63,
    "$\alpha\pi$"
] {angle = zk1--O--zk};

\end{tikzpicture}
\end{center}
  \caption{Construction of a Jordan arc $K$ with $\T(K)<\tau$ and $w\in\cP_d(K)\backslash K.$}
  \label{F2}
  \end{figure}
  
\noindent  The straight parts contribute no curvature, and hence the total absolute curvature of $K$ is equal to $\pi-\alpha\pi<\tau.$ Since $w\not\in K$ and $w\in\cP_d(K)$, the arc $K$ is not $d$-polynomially convex.
\end{proof}

\section{Circular arcs}
\begin{proof}[Proof of Theorem \ref{T3}] We have shown in \cite[Proposition~16]{S} that $A_\alpha$ is not $d$-polynomially convex if $\alpha>\frac{d-1}{d}\pi$. 

For $d=1$, $A_\alpha$ is not convex for every $\alpha>0$, and hence it is not $1$-polynomially convex. We may therefore assume $d\ge 2$.

Assume $\alpha\le\frac{d-1}{d}\pi$ and suppose that $w\in\cP_d(A_\alpha)\backslash A_\alpha$. 
By Lemma \ref{L}, there exists a probability measure $\mu$ supported on $A_\alpha$ such that 
\[w^k=\int_{A_\alpha} z^k\,d\mu,\qquad k=0,1,\ldots,d.\] 
Since $\cP_d(A_\alpha)\subset\cP_1(A_\alpha)=\Conv(A_\alpha)$, the point $w$ must lie in the interior of the unit disk. Let 
\[d\sigma_w(e^{it})=\frac{1-|w|^2}{|e^{it}-w|^2}\frac{dt}{2\pi}\]
be the Poisson probability measure. Then
\[w^k=\int_{S^1}z^k\,d\sigma_w(z)=\int_{A_\alpha} z^k\,d\mu(z),\qquad k=0,1,\ldots,d\] 
and since both measures are positive and $z^{-1}=\bar z$, we actually have
\begin{equation}\label{E0}
	\int_{S^1}z^k\,d\sigma_w(z)=\int_{A_\alpha} z^k\,d\mu(z),\qquad |k|\le d.
	\end{equation}
Now let $q(z)$ be an arbitrary polynomial of degree at most $d-1$. Since
$\cos t=\frac{e^{it}+e^{-it}}{2}$, the trigonometric polynomial
\[(\cos t-\cos\alpha)|q(e^{it})|^2\] has degree at most $d$. From (\ref{E0}) it follows that
 \begin{equation}\label{E51}
 	\int_{S^1}(\cos t-\cos\alpha)|q(e^{it})|^2\,d\sigma_w(e^{it})=\int_{A_\alpha}(\cos t-\cos\alpha)|q(e^{it})|^2\,d\mu(e^{it})\ge 0.\end{equation} 
 The inequality holds since $\cos t-\cos \alpha\ge 0$ on $A_\alpha$. 
 
 Now let 
 \[H=\Span\{1,z,z^2,\ldots,z^{d-1}\}\subset L^2(\sigma_w).\] We define the operator 
 \[C:H\to H,\quad C=P_HM_{\cos t}|_H,\]
 where $P_H$ is the orthogonal projection $P_H:L^2(\sigma_w)\to H$, and $M_{\cos t}$ is the multiplication by the function $\cos t$. The operator $C$ is Hermitian. Moreover,
 \begin{align*}
 	\langle Cq,q\rangle_{L^2(\sigma_w)}&=\langle P_HM_{\cos t}q,q\rangle_{L^2(\sigma_w)}=\langle M_{\cos t}q,P_Hq\rangle_{L^2(\sigma_w)}=\langle M_{\cos t} q,q\rangle_{L^2(\sigma_w)}\\&=\int_{S^1}(\cos t) |q|^2\,d\sigma_w.
 	\end{align*}
 Hence (\ref{E51}) implies that
 \begin{equation}\label{E55}C\ge(\cos\alpha) I.\end{equation} 
 
 We now represent $C$ in a suitable orthonormal basis and show that this matrix has an  eigenvalue smaller than $\cos\alpha$, yielding the desired contradiction.
  
  The elements 
  \[e_0=1,e_j=\frac{z^{j-1}(z-w)}{\sqrt{1-|w|^2}},\quad j=1,\ldots,d-1\]	
form an orthonormal basis of $H$. Indeed, for $j,k\ge 1$ we have 
\begin{align*}\langle e_j,e_k\rangle_{L^2(\sigma_w)}&=\int_{S^1}\frac{z^{j-1}(z-w)}{\sqrt{1-|w|^2}}\overline{\frac{z^{k-1}(z-w)}{\sqrt{1-|w|^2}}}\frac{1-|w|^2}{|z-w|^2}\,dm(z)\\&=\int_{S^1}z^{j-k}\,dm(z)=\delta_{jk},
\end{align*}
where $dm$ is the normalized Lebesgue measure. Also, for $j\ge 1$
\[\langle e_j,e_0\rangle_{L^2(\sigma_w)}=\int_{S^1}\frac{z^{j-1}(z-w)}{\sqrt{1-|w|^2}}\,d\sigma_w(z)=0\]
and
\[\langle e_0,e_0\rangle_{L^2(\sigma_w)}=1.\]
 To write $C$ in this basis, we calculate $\langle Ce_j,e_k\rangle_{L^2(\sigma_w)}$. If $j,k\ge 1$, we have 
 \begin{align*}
 	\langle Ce_j,e_k\rangle_{L^2(\sigma_w)}&=\int_{S^1}(\cos t)e_j\overline{e_k}\,d\sigma_w(z)\\&=\int_{S^1}\frac{z+z^{-1}}{2}z^{j-k}\,dm(z)\\
 	&=\frac12\int_{S^1}(z^{j-k+1}+z^{j-k-1})\,dm(z)\\
 	&=\begin{cases}
    \frac12,&|j-k|=1\\
    0,& \text{otherwise.}
\end{cases}
\end{align*}
If $j=k=0$ we get
\[\langle Ce_0,e_0\rangle_{L^2(\sigma_w)}=\int_{S^1}\cos t\,d\sigma_w(z)=\int_{S^1}\frac{z+z^{-1}}{2}\,d\sigma_w(z)
=\Real\ w.\]
For $j=1$ and $k=0$
 \begin{align*}
 	\langle Ce_1,e_0\rangle_{L^2(\sigma_w)}&=\int_{S^1}(\cos t)e_1\,d\sigma_w(z)\\&=\frac{1}{2\sqrt{1-|w|^2}}\int_{S^1}(z+z^{-1})(z-w)\,d\sigma_w(z)\\
 	&=\frac{1}{2\sqrt{1-|w|^2}}\int_{S^1}(z^2-zw+1-\bar zw)\,d\sigma_w(z)\\
 	&=\frac{1}{2\sqrt{1-|w|^2}}(w^2-w^2+1-|w|^2)=\frac{\sqrt{1-|w|^2}}{2}.
 \end{align*}
Finally, for $j\ge 2$, $k=0$
\begin{align*}
 	\langle Ce_j,e_0\rangle_{L^2(\sigma_w)}&=\int_{S^1}(\cos t)e_j\,d\sigma_w(z)\\&=\frac{1}{2\sqrt{1-|w|^2}}\int_{S^1}(z+z^{-1})z^{j-1}(z-w)\,d\sigma_w(z)\\
 	&=\frac{1}{2\sqrt{1-|w|^2}}\int_{S^1}(z^{j+1}-z^jw+z^{j-1}-z^{j-2}w)\,d\sigma_w(z)\\
 	&=\frac{1}{2\sqrt{1-|w|^2}}(w^{j+1}-w^{j+1}+w^{j-1}-w^{j-1})=0.
 \end{align*}
 The $d\times d$ matrix for the operator $C$ is thus equal to
 \[C=\begin{bmatrix} \Real\,w &\frac{\sqrt{1-|w|^2}}{2}&0&0&\cdots\\
 \frac{\sqrt{1-|w|^2}}{2}&0&\frac12&0&\cdots\\
 0&\frac12&0&\frac12&\ddots\\
 0&0&\frac12&0&\ddots\\
\vdots&\vdots&\ddots&\ddots&\ddots
\end{bmatrix}.	
\]
Let $T$ be the principal $(d-1)\times (d-1)$ submatrix obtained by deleting the first row and first column
\[T=\begin{bmatrix}
 0&\frac12&0&\cdots\\
 \frac12&0&\frac12&\ddots\\
 0&\frac12&0&\ddots\\
\vdots&\ddots&\ddots&\ddots
\end{bmatrix}.	
\]
The eigenvalues of this Toeplitz matrix are 
\[\lambda_r(T)=\cos\frac{r\pi}{d},\quad r=1,\ldots,d-1\]
with the minimal eigenvalue being
\[\lambda_{\min}(T)=\cos\frac{(d-1)\pi}{d}=-\cos{\frac{\pi}{d}}.\]
An eigenvector corresponding to $\lambda_{\min}(T)$ is
\[\left(\sin\frac{(d-1)\pi}{d},\sin\frac{2(d-1)\pi}{d},\ldots,\sin\frac{(d-1)^2\pi}{d}\right).\]
Let $v$ be the normalization of this eigenvector. Then its first component $v_1$ is nonzero. 

By the Cauchy interlacing theorem, the minimal eigenvalue of $C$, $\lambda_{\min}(C)$, is less than or equal to $\lambda_{\min}(T)$. We will show the strict inequality
\[\lambda_{\min}(C)<\lambda_{\min}(T).\]
For $\epsilon\in\R$, take
\[u=(\epsilon,v).\]
The Rayleigh quotient $R(C,u)$ equals
\[R(C,u)=\frac{\langle Cu,u\rangle}{\|u\|^2}=\frac{\epsilon^2 \Real\,w+ \epsilon\sqrt{1-|w|^2}v_1+\lambda_{\min}(T)}{1+\epsilon^2}.\]
For sufficiently small nonzero $\epsilon$ with sign opposite to that of  $v_1$, we get the inequality
\[\lambda_{\min}(C)\le R(C,u)<\lambda_{\min}(T)=-\cos\frac{\pi}{d}.\]
On the other hand, by (\ref{E55}) and the assumption $\alpha\le \pi-\pi/d,$
\[\lambda_{\min}(C)\ge\cos \alpha\ge \cos\left(\pi-\frac{\pi}{d}\right)=-\cos\frac{\pi}{d},\]
 which is a contradiction.
\end{proof}

\end{document}